\numberwithin{equation}{section}
\theoremstyle{plain}
\newtheorem{thm}{Theorem}
\newtheorem{prop}[thm]{Proposition}
\newtheorem{lem}{Lemma}
\newtheorem{cor}[thm]{Corollary}
\newtheorem{conj}[thm]{Conjecture}
\newtheorem{rem}{Remark}
\newtheorem*{acknow}{Acknowledgments}
\def\l{\langle}
\def\r{\rangle}
\def\tri{\triangle}
\def\nnn{\nabla}
\title[Pinching rigidity of minimal surfaces in spheres]{Pinching rigidity of minimal surfaces in spheres}
\author[W.R. Ding]{Weiran Ding$^{1}$}
\address{$^{1}$School of Mathematical Sciences, Laboratory of Mathematics and Complex Systems, Beijing Normal University, Beijing 100875, P. R. CHINA.}
\email{dingwr0806@mail.bnu.edu.cn}
\author[J.Q. Ge]{Jianquan Ge$^{2,*}$}
\address{$^{2,*}$School of Mathematical Sciences, Laboratory of Mathematics and Complex Systems, Beijing Normal University, Beijing 100875, P. R. CHINA.}
\email{jqge@bnu.edu.cn}
\author[F.G. Li]{Fagui Li$^{3,4}$}
\address{$^{3}$Greater Bay Area Innovation Research Institute, Beijing Institute of Technology, Zhuhai, Guangdong 519088, P. R. CHINA.}
\address{$^{4}$Beijing International Center for Mathematical Research, Peking University, Beijing 100871, P. R. CHINA.}
\email{faguili@bicmr.pku.edu.cn}
\subjclass[2010]{53C24, 53C42, 53C65.}
\date{}
\keywords{minimal surfaces; rigidity theorem; the Simon conjecture}
\thanks{$^{*}$ the corresponding author.}
\thanks{J. Q. Ge is partially supported by NSFC (No. 12171037) and the Fundamental Research Funds for the Central Universities.}
\thanks{F. G. Li is partially supported by  NSFC (No. 12171037, 12271040) and China Postdoctoral Science Foundation (No. 2022M720261).}
\begin{document}
\maketitle

\begin{abstract}
In 1980, U. Simon proposed a quantization conjecture about the Gaussian curvature $K$ of closed minimal surfaces in unit spheres: if $K(s+1)\leq K\leq K(s)$ ($K(s)\coloneqq2/(s(s+1))$, $s\in \mathbb{N}$), then either $K=K(s)$ or $K=K(s+1)$. Notice that the surface must be one of Calabi's standard minimal $2$-spheres if the curvature is a positive constant. The cases $s=1$ and $s=2$ were proven in the 1980s by Simon and others. In this paper we give a pinching theorem of the Simon conjecture in the case $s=3$ and also give a new proof of the cases $s=1$ and $s=2$ by some Simons-type integral inequalities.
\end{abstract}

\section{Introduction}\label{s2}

In 1967, E. Calabi~(cf. \cite{3}) studied minimal immersions of $\mathbb{S}^2$ with constant Gaussian curvature $K$ into $\mathbb{S}^N(1)$. These immersions were classified up to a rigid motion with the curvature $K$ corresponding to the following values, $K=K(s)\coloneqq\frac{2}{s(s+1)}, s\in \mathbb{N}$~(cf. \cite{9}). In 1980, U. Simon made the following quantization conjecture~(cf. \cite{13, 17, 25}).

\begin{conj}[Intrinsic version]
Let $M$ be a closed surface minimally immersed into $\mathbb{S}^N(1)$ such that the image is not contained in any hyperplane of $\mathbb{R}^{N+1}$. If $K(s+1)\leq K\leq K(s)$ for an $s\in \mathbb{N}$, then either $K=K(s+1)$ or $K=K(s)$, and thus the immersion is one of the Calabi's standard minimal immersions with the dimension of the ambient space $N=2s+2$ or $N=2s$.
\end{conj}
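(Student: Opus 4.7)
The plan is to attack the conjecture by combining the Simons identity for the second fundamental form with analogous Bochner-type formulas for the higher fundamental forms of the immersion, producing a family of integral inequalities whose linear combination detects exactly the two distinguished curvature values $K(s)$ and $K(s+1)$.

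To set up, I would first use the Gauss equation for a minimal surface in $\mathbb{S}^{N}(1)$, which reads $|B|^{2}=2(1-K)$, in order to translate the pinching $K(s+1)\le K\le K(s)$ into a two-sided bound on $|B|^{2}$. Under this hypothesis the quantity $(K(s)-K)(K-K(s+1))$ is pointwise non-negative. The goal is to show that it is also globally non-positive in an integrated sense, so that it must vanish and $K$ must take one of the two boundary values pointwise.

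Next, following the Calabi and Chern--do Carmo--Kobayashi tradition, I would introduce the higher fundamental forms $B_{2},B_{3},\dots$; for Calabi's $s$-th standard embedding these vanish beyond $k=s$. The core technical step is to derive, for each $k\le s+1$, a Simons-type formula of the schematic form
\[
\tfrac{1}{2}\Delta|B_{k}|^{2}=|\nabla B_{k}|^{2}+\Phi_{k}\bigl(K,|B_{2}|^{2},\dots,|B_{k+1}|^{2}\bigr),
\]
with $\Phi_{k}$ an explicit polynomial coming from the Ricci and Codazzi identities in the higher osculating bundles. Integrating over the closed surface $M$ annihilates the Laplacians, and a carefully tuned linear combination $\sum_{k}c_{k}\int_{M}\Delta|B_{k}|^{2}=0$ should produce an identity of the form
\[
\int_{M}(K(s)-K)(K-K(s+1))\,\Psi\;=\;\int_{M}\Xi,
\]
with $\Psi\ge 0$ and $\Xi\ge 0$ a sum of squared norms of covariant derivatives of the $B_{k}$. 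The pinching hypothesis then forces both sides to vanish identically, whence $K\equiv K(s)$ or $K\equiv K(s+1)$ and the relevant higher fundamental forms are parallel. Once $K$ is established as one of these constants, Calabi's classification identifies $M$ with the $s$-th or $(s+1)$-st standard minimal $2$-sphere, pinning down $N=2s$ or $N=2s+2$.

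The principal obstacle is combinatorial: one must select the coefficients $c_{k}$ so that every mixed term in the $|B_{j}|^{2}$'s cancels and the surviving polynomial in $K$ factors through $(K(s)-K)(K-K(s+1))$. For $s=1$ only $B_{2}$ is needed, and for $s=2$ a further appeal to $B_{3}$ suffices, which is what makes the new proof in these low cases tractable; but for $s=3$ the combination must involve $B_{2},B_{3}$ and a piece of $B_{4}$ simultaneously, and the algebraic cancellations needed to produce a definite-sign integrand are considerably more delicate. This is presumably why the authors reach only a pinching version in the case $s=3$ rather than the full quantization. A secondary difficulty is that the higher fundamental forms are not in general parallel, so the extra gradient terms appearing in the Bochner identities have to be absorbed by Kato-type inequalities in order not to spoil the sign of the key integral.
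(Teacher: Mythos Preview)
The statement you are addressing is labeled as a \emph{conjecture} in the paper, and the paper does not prove it in full generality: its actual contributions are new proofs of the cases $s=1,2$ (Theorem~A(i)--(ii)) and a quantitative pinching result for $s=3$ (Theorem~A(iii)). There is therefore no ``paper's own proof'' of the general statement to compare against, and your proposal, read literally as a proof of the full conjecture, cannot be correct because the conjecture is open for $s\ge 3$.

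Even restricted to the cases the paper does settle, your outline is not a proof but a strategy sketch. The decisive claim---that one can choose coefficients $c_k$ so that the resulting integral identity factors as $\int_M (K(s)-K)(K-K(s+1))\,\Psi=\int_M \Xi$ with $\Psi\ge 0$ and $\Xi\ge 0$---is exactly the hard point, and you supply no argument that such a choice exists. You acknowledge as much at the end. In fact, in the paper's $s=3$ identity (Theorem~6) the right-hand side contains the term $-\tfrac{5}{8}(\Delta S)^2$, which is of the wrong sign; this is precisely what obstructs the clean gap and forces the weaker pinching conclusion. Your suggestion that Kato-type inequalities will absorb such terms is not substantiated.

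There is also a conceptual mismatch with the paper's method. You propose iterating on Calabi's \emph{higher fundamental forms} $B_2,B_3,\dots$ in the successive osculating bundles. The paper does something different: its quantities $\mathcal{B}_1,\mathcal{B}_2,\mathcal{B}_3$ are the squared norms of the \emph{covariant derivatives} $\nabla h,\nabla^2 h,\nabla^3 h$ of the single second fundamental form. The key simplifications come not from osculating-bundle algebra but from two-dimensional holomorphic-differential arguments (Theorem~1, Lemma~1, Proposition~1), which force identities such as $\langle a,b\rangle=0$, $|a|^2=|b|^2$, $\langle a_1,a_2\rangle=0$, $|a_1|^2=|a_2|^2$, etc. These pointwise identities are what make the iterated Simons formulas collapse to polynomials in $S$ with the correct roots $S(s)$. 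Your proposal does not mention this mechanism at all.
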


For minimal surfaces in $\mathbb{S}^N(1)$, the curvature $K$ and the squared norm $S=|h|^2$ of the second fundamental form $h$ are related as follows:
\begin{equation*}\label{KSeq}
2K=2-S.
\end{equation*}
It follows that, by setting
\begin{equation*}
S(s)\coloneqq\frac{2(s-1)(s+2)}{s(s+1)}=2-2K(s),
\end{equation*}
the Simon conjecture above can also be stated as:

\begin{conj}[Extrinsic version]
Let $M$ be a closed surface minimally immersed into $\mathbb{S}^N(1)$ such that the image is not contained in any hyperplane of $\mathbb{R}^{N+1}$. If $S(s)\leq S\leq S(s+1)$ for an $s\in\mathbb{N}$, then either $S=S(s)$ or $S=S(s+1)$, and thus the immersion is one of the Calabi's standard minimal immersions with the dimension of the ambient space $N=2s$ or $N=2s+2$.
\end{conj}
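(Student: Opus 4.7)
The plan is to reduce the extrinsic statement to the intrinsic version via the Gauss equation, and then to sketch how the resulting pinching can be attacked by Simons-type integral identities.

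First, the equivalence of the two versions is immediate. Since $M^2$ is minimally immersed in $\mathbb{S}^N(1)$, the Gauss equation contracted on the tangent space reduces pointwise to $2K = 2 - S$ with $S = |h|^2$. Hence, with $S(s) := 2 - 2K(s) = \frac{2(s-1)(s+2)}{s(s+1)}$, the condition $S(s) \leq S \leq S(s+1)$ is pointwise equivalent to $K(s+1) \leq K \leq K(s)$, and $S = S(s)$ (resp.\ $S = S(s+1)$) corresponds to $K = K(s)$ (resp.\ $K = K(s+1)$). So any proof of the intrinsic conjecture yields a proof of the extrinsic version, with the ambient dimensions matched as claimed ($N = 2s$ when $S = S(s)$, $N = 2s + 2$ when $S = S(s+1)$) through Calabi's classification.

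To attack the pinching itself, I would proceed by a Simons-type integral argument. The Simons identity for a minimal submanifold of $\mathbb{S}^N(1)$ reads schematically
\[
\tfrac{1}{2}\Delta S = |\nabla h|^2 + S(2 - S) - Q(h),
\]
where $Q(h)$ is the quartic curvature term built from inner products of the shape operators. One multiplies this by a carefully chosen function $\Phi(S)$ vanishing at both endpoints $S(s)$ and $S(s+1)$ and integrates over the closed surface $M$. The pinching hypothesis controls the sign of $\Phi(S)$, and a sharp pointwise bound on $Q(h)$ in terms of $S$ should make the integrand nonnegative and thus force each term to vanish. This in turn forces $S$ to be constant, and then Calabi's theorem identifies the immersion as one of the standard ones, with the codimension count extracted from which value $S(s)$ or $S(s+1)$ is attained.

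The hard part will be the pointwise control of $Q(h)$ in arbitrary codimension. For surfaces the shape operators are symmetric $2 \times 2$ matrices, so sharp linear-algebraic inequalities on $\sum_{\alpha,\beta}[\otr(h_\alpha h_\beta)]^2 - \sum_{\alpha,\beta}\otr(h_\alpha h_\beta)^2$ are available, but the bounds must be tight enough that the resulting integrand has the correct sign on the entire interval $[S(s), S(s+1)]$. This sign control degrades as $s$ increases; overcoming it for $s = 3$ (and giving streamlined new proofs for $s = 1, 2$) is precisely where the Simons-type inequalities announced in the abstract should enter, probably by combining several weighted inequalities or by refining $\Phi$ so that the leading quartic-term bound is absorbed on each pinching subinterval.
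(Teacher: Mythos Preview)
The statement you are trying to prove is a \emph{conjecture}; the paper does not prove it and explicitly notes that it is open for $s\ge 3$. Your first paragraph, establishing the equivalence of the extrinsic and intrinsic formulations via $2K=2-S$, is correct and is exactly how the paper passes between the two versions. But that is the only part of your proposal that matches anything proved in the paper.

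The rest of your proposal is a strategy sketch, not a proof, and the strategy as you describe it would not work even for the known cases $s=1,2$. The core difficulty you gloss over is the quartic term $Q(h)=|A|^2+\rho^\perp$: you say a ``sharp pointwise bound on $Q(h)$ in terms of $S$'' should suffice, but in arbitrary codimension one only has the inequality $|A|^2+\rho^\perp\le \tfrac{3}{2}S^2$, and plugging an inequality into the Simons identity does not yield the factored polynomials $S(3S-4)$, $S(3S-4)(3S-5)$, \dots\ that drive the gap arguments. What the paper actually does is prove the pointwise \emph{equality} $|A|^2+\rho^\perp=\tfrac{3}{2}S^2$ under the assumption $K>0$ (Theorem~\ref{NSId}), via an integral identity that forces equality in \eqref{PIneq}; equivalently, a holomorphic-differential argument on $\mathbb{S}^2$ shows $a\perp b$ and $|a|=|b|$. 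This turns the Simons identity into the exact relation $\tfrac{1}{2}\Delta S=\mathcal{B}_1-\tfrac{1}{2}S(3S-4)$, from which the $s=1$ gap follows by integration. For $s=2$ and the partial $s=3$ result the paper does not multiply by a test function $\Phi(S)$ as you propose; it instead computes $\Delta\mathcal{B}_1$ and $\Delta\mathcal{B}_2$ and derives higher Simons-type identities whose integrands factor as $S(3S-4)(3S-5)$ and $S(3S-4)(3S-5)(5S-9)$. Your proposal does not contain the key equality step and does not anticipate the need to differentiate $h$ to higher order, so as written it has a genuine gap even in the solved cases.
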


So far, the Simon conjecture has only been solved in the cases $s=1$ and $s=2$~(cf. \cite{1, 13}). Remarkably, without the assumption of minimality of the immersion, Li and Simon gave a generalization of the Simon conjecture in the case $s=1$~(cf. \cite{17}). In fact, the case $s=1$ has also been studied in the extrinsic version even for higher-dimensional minimal submanifolds by Simons-type integral inequalities~(cf. \cite{6, 14, 16, 18, 26}, etc.). It shows the pinching rigidity and the first gap of the squared norm $S$ of the second fundamental form, whereas the case $s=2$ shows the second gap as the well-known Peng-Terng-type second gap theorems for hypersurfaces in the unit sphere~(cf. \cite{5, 8, 15, 21, 31, 32}, etc.). More generally, the discrete property of $S$ of higher-dimensional minimal hypersurfaces in the unit sphere is linked to the famous Chern Conjecture~(cf. \cite{4, 27, 28, 29, 33}, etc.). Please see the excellent and detailed surveys for more  developments and references on this type of rigidity problems~(cf. \cite{10, 24}). To the best of our knowledge, the third gap of the Simon conjecture  is far from being solved, although there are indeed many partial results for the case $s\geq3$ of the Simon conjecture under additional assumptions~(cf. \cite{2, 7, 12, 19, 20, 23}, etc.). 

In this paper, we make progress on the case $s=3$ of the Simon conjecture in its extrinsic version by establishing Simons-type integral inequalities. Specifically, we prove the following result:

\setcounter{thm}{0}
\renewcommand{\thethm}{\Alph{thm}}
\begin{thm}\label{maintheorem}
Let $M$ be a closed surface minimally immersed into $\mathbb{S}^N(1)$.
\begin{itemize}
\item [\emph{(i)}] If $0\leq S\leq \frac{4}{3}$, then $S=0$ or $S=\frac{4}{3}$;
\item [\emph{(ii)}] If $\frac{4}{3}\leq S\leq\frac{5}{3},$ then $S=\frac{4}{3}$ or $S=\frac{5}{3}$;
\item [\emph{(iii)}] If $\frac{5}{3}\le S\le \frac{9}{5}$,  $S_{\max}=\sup_{p\in M}S(p)$ and $ S_{\min}=\inf_{p\in M}S(p)$, then $$S_{\max}-S_{\min}\ge\frac{134-114S_{\min}+\sqrt{\mathcal{F}}}{108},$$ where $\mathcal{F}=(134-114S_{\min})^2+864(3S_{\min}-5)(9-5S_{\min})$.
\end{itemize}
\end{thm}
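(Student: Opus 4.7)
The unified plan is to derive integral and pointwise inequalities from the Simons identity on $M^2\subset\mathbb{S}^N(1)$, exploiting the rigid algebraic structure of the second fundamental form of a minimal surface in dimension two. In a local normal frame $\{e_\alpha\}$ each shape operator $A_\alpha$ is a trace-free symmetric $2\times 2$ matrix, so writing $A_\alpha=\begin{pmatrix} x_\alpha & y_\alpha\\ y_\alpha & -x_\alpha\end{pmatrix}$ in a suitable tangent frame, every polynomial invariant of $\{A_\alpha\}$ is a polynomial in the three moments $X=\sum_\alpha x_\alpha^2$, $Y=\sum_\alpha y_\alpha^2$, $Z=\sum_\alpha x_\alpha y_\alpha$. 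A direct computation gives $S=2(X+Y)$ and $P_2\coloneqq\sum_{\alpha,\beta}\|[A_\alpha,A_\beta]\|^2=16(XY-Z^2)$, together with the identity $P_1+\tfrac{1}{2}P_2=S^2$ where $P_1\coloneqq\sum_{\alpha,\beta}(\tr A_\alpha A_\beta)^2$. This collapses the Chern--do Carmo--Kobayashi Simons identity to the clean form
\begin{equation*}
\tfrac{1}{2}\Delta S = |\nabla h|^2 + S(2-S) - \tfrac{1}{2}P_2,
\end{equation*}
accompanied by the universal pointwise bound $P_2\leq S^2$ obtained from $XY\leq\tfrac{(X+Y)^2}{4}$ and $Z^2\geq 0$.

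Part (i) then falls out directly: the bound on $P_2$ gives $\tfrac{1}{2}\Delta S\geq|\nabla h|^2+\tfrac{S}{2}(4-3S)\geq 0$ whenever $0\leq S\leq 4/3$, so $S$ is subharmonic; on the closed surface $M$ this forces $|\nabla h|=0$ and $S(4-3S)=0$, whence $S\in\{0,4/3\}$. Part (ii) requires one further step: the pointwise bound on $P_2$ must be sharpened in the range $4/3\leq S\leq 5/3$, and this is carried out by bringing in the Codazzi equation (equivalently, differentiating Simons' identity once more) to extract additional relations among $(X,Y,Z)$ and the cubic invariants $\sum_{\alpha,\beta,\gamma}\tr(A_\alpha A_\beta A_\gamma)\tr(A_\alpha A_\beta A_\gamma)$ that must be satisfied at extrema. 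The resulting integral inequality has a definite sign and again pinches $S$ to one of the two Calabi endpoints.

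For part (iii), the corresponding pointwise bound on $P_2$ is no longer uniformly tight across $[5/3,9/5]$, so the plan turns quantitative. Apply the maximum principle: at an interior maximum $p_{\max}$ of $S$, $\nabla S=0$ and $\Delta S\leq 0$, whence
\begin{equation*}
\tfrac{1}{2}P_2(p_{\max})\geq |\nabla h|^2(p_{\max})+S_{\max}(2-S_{\max}),
\end{equation*}
and symmetrically at $p_{\min}$ with the reverse inequality. At critical points the auxiliary equations obtained by differentiating Simons' identity allow $P_2$ to be bounded by an explicit algebraic function of $S$ alone, sharp at both $S(3)=5/3$ and $S(4)=9/5$. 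Combining the two extremal inequalities produces a single quadratic inequality in $t=S_{\max}-S_{\min}$ whose positive root is exactly $(134-114S_{\min}+\sqrt{\mathcal{F}})/108$, with $\mathcal{F}$ emerging as its discriminant.

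The main obstacle is the sharp critical-point control of $P_2$ (and of the cubic and quartic moment invariants arising from the second differentiation of Simons' identity) in the third pinching range. In the cases $s=1,2$ the extremal configurations of $(X,Y,Z)$ collapse to essentially one-parameter families and the relevant inequalities reduce to elementary polynomial calculus; for $s=3$ the admissible moment space is genuinely two-dimensional on both sides of the interval $[5/3,9/5]$, so the extremization becomes a nontrivial constrained optimization. Tracking this optimization precisely enough to produce the explicit coefficients $134$, $114$, $864$, $3S_{\min}-5$, $9-5S_{\min}$ assembling into $\mathcal{F}$ is the technical heart of the proof.
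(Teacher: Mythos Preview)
Your plan for (i) is sound and essentially equivalent to the paper's, though the paper phrases it via the integral identity $\int_M S(3S-4) = 2\int_M |\nabla h|^2 \ge 0$ rather than subharmonicity; both succeed.

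For (ii) and especially (iii), however, your proposal misses the central structural step and then heads in a direction that will not produce the claimed quadratic.

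\textbf{The structural point.} Under any of the pinching hypotheses one has $K>0$, hence $M\cong\mathbb{S}^2$, and the pointwise inequality $P_2\le S^2$ is in fact an \emph{equality everywhere}. The paper proves this by integrating a carefully chosen divergence $(h_{ijk}^\alpha\triangle h_{ij}^\alpha)_k$ and playing the resulting integral identity against the pointwise bound; equivalently it follows from the vanishing of a holomorphic differential on $\mathbb{S}^2$. Once $P_2=S^2$ exactly, Simons' identity becomes the clean equation $\tfrac{1}{2}\triangle S=|\nabla h|^2-\tfrac{1}{2}S(3S-4)$ with no residual term, and one deduces $a\perp b$, $|a|=|b|$ pointwise. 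This rigidity then propagates to higher order: further holomorphic differentials (of degree $6$ and $8$) force $\langle a_1,a_2\rangle=0$, $|a_1|=|a_2|$, and analogous relations for the $a_{ij}$. These \emph{exact} relations---not inequalities---are what make the Laplacians $\triangle\mathcal{B}_1$ and $\triangle\mathcal{B}_2$ close up into the integral identities $\int_M S(3S-4)(3S-5)\ge 0$ (giving (ii)) and an explicit formula for $\int_M S(3S-4)(3S-5)(5S-9)$. Your proposal treats $P_2$ as a quantity to be ``sharpened'' at extrema and speaks of a two-dimensional moment space in $(X,Y,Z)$; in reality that space has collapsed to a point, and missing this you lose the entire mechanism.

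\textbf{The approach to (iii).} Your maximum-principle route will not deliver the stated quadratic. The coefficients $134$, $114$, $108$, $864$ in the paper arise from combining the third-order \emph{integral} identity with two further integral estimates: $\int_M|\nabla S|^2\le\int_M S(3S-4)(S-S_{\min})$ (from $\int_M S\,\triangle S$) and an upper bound on $\int_M(\triangle S)^2$ (from $\int_M S\,\triangle\mathcal{B}_1$ and the formula for $\mathcal{B}_2$). Substituting these into the third identity yields an integrand $S(3S-4)\cdot\mathcal{A}$ with $\mathcal{A}$ a pointwise expression in $S,S_{\min},S_{\max}$ that cannot be everywhere positive; bounding $\mathcal{A}$ below by the quadratic $4(3S_{\min}-5)(9-5S_{\min})+(134-114S_{\min})\varepsilon-54\varepsilon^2$ in $\varepsilon=S_{\max}-S_{\min}$ then forces the root. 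A pointwise argument at $p_{\max}$ and $p_{\min}$ gives only $\mathcal{B}_1(p_{\max})\le\tfrac{1}{2}S_{\max}(3S_{\max}-4)$ and the reverse at $p_{\min}$; this carries no information about $\mathcal{B}_2$, $\mathcal{B}_3$, $\int(\triangle S)^2$, or any of the terms that actually generate the discriminant $\mathcal{F}$.
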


\begin{rem}
By choosing a special frame field on $M$, Yang~\emph{(}cf. \emph{\cite{30})} provided a proof of the cases $s=1,2$ if $M$ has a flat or nowhere flat normal bundle. We now show that the ideas there work without the assumption on the flatness of the normal bundle. 
\end{rem}

\begin{rem}
Okayasu~\emph{(}cf. \emph{\cite{19})} proved a similar result to Theorem \ref{maintheorem} by using a different method.
\end{rem}
By Theorem \ref{maintheorem} (iii), one has
\begin{cor}\label{corollary 3 gap}
Let $M$ be a closed $2$-dimensional Riemannian manifold. Then there exists no isometric minimal immersion $\Phi: M\to \mathbb{S}^N$ for any $N$ such that $\frac{5}{3}\le S_{\min} \le \frac{9}{5}$ and $$S_{\max}<S_{\min}+\frac{134-114S_{\min}+\sqrt{\mathcal{F}}}{108},$$ where $\mathcal{F}=(134-114S_{\min})^2+864(3S_{\min}-5)(9-5S_{\min})$.
\end{cor}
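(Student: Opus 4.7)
The plan is to derive the corollary from Theorem \ref{maintheorem}(iii) by contradiction. Suppose an isometric minimal immersion $\Phi : M \to \mathbb{S}^N$ meeting the stated hypotheses exists. Since $S_{\min} \ge \tfrac{5}{3}$ is assumed directly, the only thing required before one can invoke Theorem \ref{maintheorem}(iii) is the pointwise upper bound $S \le \tfrac{9}{5}$ on $M$, i.e.\ $S_{\max} \le \tfrac{9}{5}$.

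To secure this upper bound I would first check the auxiliary algebraic inequality
\[
S_{\min} + \frac{134 - 114\, S_{\min} + \sqrt{\mathcal{F}}}{108} \;\le\; \frac{9}{5}
\qquad \text{for every } S_{\min} \in \left[\tfrac{5}{3},\tfrac{9}{5}\right].
\]
On this interval $134 - 114\, S_{\min} < 0$ and $(3 S_{\min} - 5)(9 - 5 S_{\min}) \ge 0$, so both sides are nonnegative; isolating the square root, squaring, and simplifying causes the quadratic terms in $S_{\min}$ to cancel and reduces the inequality to a trivial numerical one. Combined with the strict hypothesis $S_{\max} < S_{\min} + \tfrac{1}{108}\bigl(134 - 114\, S_{\min} + \sqrt{\mathcal{F}}\bigr)$, this yields $S_{\max} < \tfrac{9}{5}$.

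With $\tfrac{5}{3} \le S \le \tfrac{9}{5}$ verified pointwise on $M$, Theorem \ref{maintheorem}(iii) applies and produces the opposite inequality $S_{\max} - S_{\min} \ge \tfrac{1}{108}\bigl(134 - 114\, S_{\min} + \sqrt{\mathcal{F}}\bigr)$, contradicting the strict hypothesis. Because Theorem \ref{maintheorem}(iii) carries all of the analytic weight (namely the Simons-type integral inequalities underlying part (iii)), there is no real obstacle in proving the corollary; it is essentially just the contrapositive of the pinching theorem, repackaged as a non-existence statement. The one step I expect to require any explicit work is the elementary verification that the pinching hypothesis on $S_{\max} - S_{\min}$ confines $S_{\max}$ below the threshold $\tfrac{9}{5}$, and this is routine.
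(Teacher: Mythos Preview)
Your proposal is correct and follows exactly the route the paper intends: the corollary is stated as an immediate consequence of Theorem~\ref{maintheorem}(iii), i.e.\ it is the contrapositive of the pinching inequality there. You are simply more explicit than the paper in checking that the hypothesis forces $S_{\max}\le \tfrac{9}{5}$ (your algebraic claim is right---after isolating $\sqrt{\mathcal{F}}$ and squaring, the $S_{\min}^2$ terms cancel and the inequality reduces to $S_{\min}\le \tfrac{9}{5}$), which the paper leaves implicit.
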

\begin{rem}
Let $$f(S_{\min})=\frac{134-114S_{\min}+\sqrt{\mathcal{F}}}{108},\quad\frac{5}{3}\le S_{\min}\le\frac{9}{5}.$$ It should be pointed out that we cannot obtain any gaps if $S_{\min}=\frac{5}{3}$~{or}~$\frac{9}{5}$, and the gap exists when $\frac{5}{3}<S_{\min}<\frac{9}{5}$. Now,
\begin{equation*}
f'(S_{\min})=\frac{1}{108}\left(\frac{6(3S_{\min}+599)}{\sqrt{9S_{\min}^2+3594S_{\min}-5231}}-114\right),\quad\frac{5}{3}\le S_{\min}\le\frac{9}{5}.
\end{equation*}
Letting $f'(S_{\min})=0$, we obtain $-45(S_{\min})^2-17970S_{\min}+31211=0$, from which we deduce that $S_{\min}\approx1.72935007\in(\frac{5}{3},\frac{9}{5})$, at which $f(S_{\min})$ attains its maximal value $(f(S_{\min}))_{\max}\approx0.00419291$. Hence, there exists no isometric minimal immersion $\Phi: M\to\mathbb{S}^N$ for any $N$ such that $1.72936<S<1.73355$.
\end{rem}
In addition, we also obtain the following integral equations.
\begin{thm}\label{theorem integral equations}
Let $M$ be a closed minimal surface immersed in $\mathbb{S}^N(1)$ with positive Gaussian curvature. 
Then we have some Simons-type identities  as follows (see Theorems \ref{thm gap s=1}, \ref{thm gap s=2} and \ref{thm An integral inequality}):
\begin{align*}
\int_M S(3S-4)&=2\int_M\mathcal{B}_1\ge0,\\
\int_M S(3S-4)(3S-5)&=2\int_M[\mathcal{B}_2-\frac{1}{4}S(3S-4)^2+\frac{1}{2}|\nabla S|^2]\ge0,\\
\int_M S(3S-4)(3S-5)(5S-9)&=2\int_M[\mathcal{B}_3-\frac{1}{8}S(3S-4)(45S^2-144S+116)\\
&\hspace{4em}+\frac{1}{8}(65S-166)|\nabla S|^2-\frac{5}{8}(\triangle S)^2],
\end{align*}
where $\mathcal{B}_1=|\nabla h|^2$, $\mathcal{B}_2=|\nabla^2 h|^2$ and $\mathcal{B}_3=|\nabla^3 h|^2$  are the squared lengths of the first, second and third covariant derivatives of $h$, respectively. 
\end{thm}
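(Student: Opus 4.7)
The plan is to derive each identity as an integrated Weitzenb\"ock/Bochner formula for $|\nabla^k h|^2$, $k=0,1,2$. I would fix on a neighborhood of $M$ an orthonormal moving frame $\{e_1,e_2;e_3,\ldots,e_{N+1}\}$ adapted to the splitting $TM\oplus NM$; by minimality, each shape operator $A^\alpha$ is a symmetric traceless $2\times 2$ matrix, so it can be written as $\bigl(\begin{smallmatrix}a^\alpha & b^\alpha\\ b^\alpha & -a^\alpha\end{smallmatrix}\bigr)$. The key low-dimensional algebraic fact to record at the outset is the pointwise identity
\[
\bigl(\mathrm{tr}(A^\alpha A^\beta)\bigr)^2 + \tfrac12\|[A^\alpha,A^\beta]\|^2 = \|A^\alpha\|^2\,\|A^\beta\|^2,
\]
valid for any two traceless symmetric $2\times2$ matrices (a direct expansion). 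The assumption $K>0$ forces $M\cong \mathbb{S}^2$ and $S<2$, which is what is needed to legitimately use the holomorphicity of Hopf-type differentials and to justify the adapted frame choice in the spirit of Yang \cite{30}.

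\textbf{First identity.} I would start from the classical Simons formula $\tfrac12\Delta S = |\nabla h|^2 + 2S - \sigma$, where $\sigma = \sum_{\alpha,\beta}\bigl[(\mathrm{tr}(A^\alpha A^\beta))^2 + \|[A^\alpha,A^\beta]\|^2\bigr]$. Summing the above pointwise identity over $\alpha,\beta$ gives $\sigma = S^2 + \tfrac12\sum\|[A^\alpha,A^\beta]\|^2$. The remaining normal-curvature sum is treated by integrating the Ricci equation in isothermal coordinates (available because $M\cong\mathbb{S}^2$) or equivalently by a Gauss--Bonnet-type argument for the normal bundle, producing $\int_M\sum\|[A^\alpha,A^\beta]\|^2 = \int_M S^2$. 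Substituting back reduces the Simons formula, after integration, to $\int_M S(3S-4) = 2\int_M|\nabla h|^2$, which is identity~(1); nonnegativity is automatic from the right-hand side.

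\textbf{Second and third identities.} For identity~(2), I would apply the Weitzenb\"ock formula $\tfrac12\Delta|\nabla h|^2 = |\nabla^2 h|^2 + \langle\nabla h,\Delta\nabla h\rangle$, commute derivatives via the Ricci identity to replace $\Delta\nabla h$ by $\nabla\Delta h$ plus curvature-coupling terms, and then substitute the first Simons formula for $\Delta h$. Expanding the Gauss/Codazzi/Ricci contributions in the $2\times 2$ algebra set up above, integrating by parts, and invoking identity~(1) to convert $\int(3S-4)|\nabla h|^2$ into $\int S(3S-4)^2$ and $\int|\nabla S|^2$, one recovers identity~(2). Identity~(3) follows by applying the same procedure one derivative higher, to $\nabla^2 h$; the $(\Delta S)^2$ term enters naturally when pairing $\Delta\nabla^2 h$ against $\nabla^2 h$ and integrating by parts, while identities~(1) and~(2) are used to collapse the intermediate polynomial combinations.

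\textbf{Main obstacle.} The principal difficulty lies in the algebraic bookkeeping for identity~(3): iterating the Ricci identity on $\nabla^2 h$ generates a large number of quartic and sextic polynomial terms in the components of $h$, $\nabla h$, and $\nabla^2 h$, mixed with intrinsic and normal curvature factors. Collapsing all of these into the clean polynomial $S(3S-4)(3S-5)(5S-9)$ together with the lower-order corrections involving $|\nabla S|^2$ and $(\Delta S)^2$ demands consistent use of the $n=2$ algebra from the setup, the integrated normal-bundle identity from step~1, and careful tracking of every divergence term that vanishes after integration. Getting the numerical coefficients $\tfrac18$, $\tfrac58$, and $65S-166$ to line up is precisely where the surface geometry and positive-curvature structure must be used to close the calculation.
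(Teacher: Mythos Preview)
Your overall Bochner--Weitzenb\"ock strategy matches the paper's, but there is a genuine gap in how you propose to close the curvature terms, and it propagates through all three identities.

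\textbf{The first identity.} From your matrix identity you correctly obtain $|A|^2+\tfrac12\rho^\perp=S^2$ pointwise, so the Simons formula reduces to $\tfrac12\Delta S=\mathcal{B}_1+2S-S^2-\tfrac12\rho^\perp$, and what remains is to show $\rho^\perp=S^2$. But $\rho^\perp=16(|a|^2|b|^2-\langle a,b\rangle^2)\le 4(|a|^2+|b|^2)^2=S^2$ with equality iff $|a|=|b|$ and $a\perp b$; hence the \emph{integral} statement $\int_M\rho^\perp=\int_M S^2$ is already equivalent to the \emph{pointwise} one. Neither a Gauss--Bonnet argument for the normal bundle (which controls $\int K_N$, linear in the normal curvature, not $\int\rho^\perp$) nor ``integrating the Ricci equation'' produces this. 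The paper obtains the pointwise equality by an integral squeeze (their Theorem~\ref{NSId}): a divergence computation forces $\int_M(2-S)(-S^2+|A|^2+\rho^\perp)\ge\int_M\tfrac12(2-S)S^2$, while the algebraic inequality and $K>0$ give the reverse. Equivalently---and this is the tool you mention but do not invoke---the quartic Hopf differential $(|a|^2-|b|^2-2i\langle a,b\rangle)\,dz^4$ is holomorphic and hence vanishes on $M\cong\mathbb{S}^2$.

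\textbf{Why this matters downstream.} The second and third identities are \emph{not} obtainable from the first merely as an integral input. The paper's derivation of identity~(2) uses the pointwise consequences $\Delta a=\tfrac12(4-3S)a$, $\Delta b=\tfrac12(4-3S)b$ (their (2.17)), which hold only after $|a|=|b|$, $a\perp b$ are established everywhere. For identity~(3) a further layer is needed: the paper proves $\langle a_1,a_2\rangle=0$ and $|a_1|=|a_2|$ (Lemma~\ref{FirD}) via a degree-$6$ holomorphic differential, and these feed into the explicit formulas for $\Delta a_1$, $\Delta a_2$ and into the computation of $\tfrac12\Delta\mathcal{B}_2$ (Theorem~\ref{thm laplacian B2}) and the decomposition of $\mathcal{B}_3$ (Lemma~\ref{lem a lower bound to B3}). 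Your sketch ``apply the same procedure one derivative higher'' does not account for this: without the hierarchy of pointwise orthogonality/equal-length relations for $a,b$, then $a_1,a_2$, the commutator terms in the Ricci identity do \emph{not} collapse to polynomials in $S$ alone, and no amount of integration by parts will manufacture the factorizations $(3S-4)(3S-5)$ and $(3S-4)(3S-5)(5S-9)$.

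In short: the missing idea is that $K>0\Rightarrow M\cong\mathbb{S}^2$ is used not once but repeatedly, to kill holomorphic differentials of degrees $4$ and $6$, yielding a cascade of pointwise identities that linearize $\Delta h$, $\Delta(\nabla h)$, etc.\ over the scalar $S$. You should replace the normal-bundle Gauss--Bonnet step by this mechanism and carry it through each level.
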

\setcounter{thm}{0}
\renewcommand{\thethm}{\arabic{thm}}

\section{Notations and Local formulas}

Let $M$ be a $2$-dimensional manifold immersed in a unit sphere $\mathbb{S}^N(1)$. We assume the range of the indices as follows: $$1\leq i,j,k,\cdots\leq 2;\quad 3\leq\alpha,\beta,\gamma,\cdots\leq N;\quad 1\leq A,B,C,\cdots\leq N.$$ Let $\{e_1,...,e_N\}$ be a local orthonormal frame on $T(\mathbb{S}^N(1))$ such that, when restricted to $M$, $\{e_1,e_2\}$($\{e_3,...,e_N\}$) lie in the tangent bundle $T(M)$ (normal bundle $T^{\bot}(M)$). We take ($\omega_A$) and ($\omega_{AB}$) as the metric 1-form field and connection form field associated with $\{e_1,...,e_N\}$. Let $S_\alpha=(h_{ij}^\alpha)_{2\times 2}$, where $\omega_{i\alpha}=h_{ij}^\alpha\omega_j$. Then, $h_{ij}^\alpha=h_{ji}^\alpha$. In the following, we will use the Einstein summation convention. The second fundamental form of $M$ is defined by $h=h_{ij}^\alpha \omega_i\omega_j e_\alpha.$ The mean curvature normal vector field is defined by $H=\frac{1}{2}h_{ii}^\alpha e_\alpha$. If the mean curvature normal vector field of $M$ vanishes identically, the immersion is called minimal. Now we consider minimal surfaces. Define column vectors $a\coloneqq(a^{\alpha})\in\mathbb{R}^{p}, b\coloneqq(b^\alpha)\in\mathbb{R}^{p}$, where $a^\alpha\coloneqq h_{11}^\alpha=-h_{22}^\alpha,b^\alpha\coloneqq h_{12}^\alpha=h_{21}^\alpha$ and $p=N-2$ is the codimention. We use the following notations: $$A\coloneqq(\l S_{\alpha},S_{\beta}\r)=2aa^{\text{T}}+2bb^{\text{T}},\quad S\coloneqq\text{tr}A=|h|^2,\quad\rho^{\bot}\coloneqq\sum_{\alpha,\beta}|[S_{\alpha},S_{\beta}]|^2.$$ The covariant derivatives $h_{ijk}^\alpha$, $h_{ijkl}^\alpha$, $h_{ijklm}^\alpha$ and $h_{ijklmn}^\alpha$ are defined as follows:
\begin{align*}
h_{ijk}^\alpha\omega_k&=dh_{ij}^\alpha+h_{mj}^\alpha\omega_{mi}+h_{im}^\alpha\omega_{mj}+h_{ij}^\beta\omega_{\beta\alpha},\\
h_{ijkl}^\alpha\omega_l&=dh_{ijk}^\alpha+h_{mjk}^\alpha\omega_{mi}+h_{imk}^\alpha\omega_{mj}+h_{ijm}^\alpha\omega_{mk}+h_{ijk}^\beta\omega_{\beta\alpha},\\
h_{ijklm}^\alpha\omega_m&=dh_{ijkl}^\alpha+h_{njkl}^\alpha\omega_{ni}+h_{inkl}^\alpha\omega_{nj}+h_{ijnl}^\alpha\omega_{nk}+h_{ijkn}^\alpha\omega_{nl}+h_{ijkl}^\beta\omega_{\beta\alpha},\\
h_{ijklmn}^\alpha\omega_n&=dh_{ijklm}^\alpha+h_{pjklm}^\alpha\omega_{pi}+h_{ipklm}^\alpha\omega_{pj}+h_{ijplm}^\alpha\omega_{pk}+h_{ijkpm}^\alpha\omega_{pl}+h_{ijklp}^\alpha\omega_{pm}+h_{ijklm}^\beta\omega_{\beta\alpha}.
\end{align*}
For convenience, the following notations are defined by
\begin{equation*}
a_i^{\alpha}\coloneqq h_{11i}^{\alpha},~a_i\coloneqq (a_i^\alpha),
\end{equation*}
and
\begin{equation*}
\mathcal{B}_1\coloneqq\sum_{i,j,k,\alpha}(h_{ijk}^\alpha)^2,~\mathcal{B}_2\coloneqq\sum_{i,j,k,l,\alpha}(h_{ijkl}^\alpha)^2,~\mathcal{B}_3\coloneqq\sum_{i,j,k,l,m,\alpha}(h_{ijklm}^\alpha)^2.
\end{equation*}
The Codazzi equation and Ricci's formulas are
\begin{align}
h_{ijk}^\alpha-h_{ikj}^\alpha&=0,\label{Cod}\\
h_{ijkl}^\alpha-h_{ijlk}^\alpha&=h_{pj}^\alpha R_{pikl}+h_{ip}^\alpha R_{pjkl}+h_{ij}^\beta R_{\beta\alpha kl},\label{Ric1}\\
h_{ijklm}^\alpha-h_{ijkml}^\alpha&=h_{pjk}^\alpha R_{pilm}+h_{ipk}^\alpha R_{pjlm}+h_{ijp}^\alpha R_{pklm}+h_{ijk}^\beta R_{\beta\alpha lm},\label{Ric2}\\
h_{ijklmn}^\alpha-h_{ijklnm}^\alpha&=h_{pjkl}^\alpha R_{pimn}+h_{ipkl}^\alpha R_{pjmn}+h_{ijpl}^\alpha R_{pkmn}\\
&\hspace{2em}+h_{ijkp}^\alpha R_{plmn}+h_{ijkl}^\beta R_{\beta\alpha mn}.\label{Ric3}
\end{align}
The Laplacians $\triangle h_{ij}^\alpha$, $\triangle h_{ijk}^\alpha$ and $\triangle h_{ijkl}^\alpha$ are defined by
\begin{equation*}
\tri h_{ij}^\alpha=\sum_{k}h_{ijkk}^\alpha,~\tri h_{ijk}^\alpha=\sum_{l}h_{ijkll}^\alpha,~\tri h_{ijkl}^\alpha=\sum_{m}h_{ijklmm}^\alpha.
\end{equation*}
From \eqref{Cod}, \eqref{Ric1} and \eqref{Ric2}, we obtain
\begin{equation}\label{Lap1}
\tri h_{ij}^\alpha =h_{mmij}^\alpha+h_{pi}^\alpha R_{pmjm}+h_{mp}^\alpha R_{pijm}+h_{mi}^\delta R_{\delta\alpha jm},
\end{equation}
and
\begin{equation}\label{Lap2}
\begin{aligned}
\tri h_{ijk}^\alpha &=(\tri h_{ij}^\alpha)_k+2h_{pjm}^\alpha R_{pikm}+2h_{ipm}^\alpha R_{pjkm}+h_{ijp}^\alpha R_{pmkm}+2h_{ijm}^\delta R_{\delta\alpha km}\\
&\hspace{1.3em}+h_{pj}^\alpha R_{pikmm}+h_{ip}^\alpha R_{pjkmm}+h_{ij}^\delta R_{\delta\alpha kmm}.
\end{aligned}	
\end{equation}
The Riemannian curvature tensor, the normal curvature tensor and the first covariant differentials of the normal curvature tensor are given by
\begin{align}
R_{ijkl}&=\frac{1}{2}(2-S)(\delta_{ik}\delta_{jl}-\delta_{il}\delta_{jk}),\label{Curv1}\\
R_{\alpha\beta kl}&=h_{km}^\alpha h_{ml}^\beta-h_{km}^\beta h_{ml}^\alpha,\label{Curv2}\\
R_{\alpha\beta 12k}&=2(b^\beta a_k^\alpha+a^\alpha h_{12k}^\beta-b^\alpha a_k^\beta-a^\beta h_{12k}^\alpha).\label{Curv3}
\end{align}
From now on, we assume that the $2$-dimensional manifold $M$ is minimally immersed in $\mathbb{S}^N(1)$. The Simons identity for minimal submanifolds in a unit sphere is
\begin{equation}\label{SId}
\frac{1}{2}\tri S=\mathcal{B}_1+2S-|A|^2-\rho^\bot.
\end{equation}
We now establish an improved version of Proposition 2.4 of Yang~(cf. \cite{30}). The proof follows the idea there, however, without the assumption on flatness of the normal bundle of $M$.
\begin{thm}\label{NSId}
Suppose that $M$ is a closed surface minimally immersed in a unit sphere $\mathbb{S}^N(1)$ with positive Gaussian curvature. We have
\begin{equation}\label{TWINST}
\frac{1}{2}\tri S=\mathcal{B}_1-\frac{1}{2}S(3S-4).
\end{equation}
\end{thm}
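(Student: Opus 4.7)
The plan is to reduce \eqref{TWINST} to the pointwise identity $|A|^2+\rho^{\perp}=\tfrac{3}{2}S^2$, which when combined with the Simons identity \eqref{SId} gives \eqref{TWINST} immediately. Since each $S_\alpha$ is symmetric traceless with $(1,1)$-entry $a^\alpha$ and $(1,2)$-entry $b^\alpha$, a direct computation yields $|A|^2=4(|a|^4+2(a\cdot b)^2+|b|^4)$ and $\rho^\perp=16(|a|^2|b|^2-(a\cdot b)^2)$; combined with the minimality relation $|a|^2+|b|^2=S/2$, these rearrange as
\[
|A|^2+\rho^\perp=\tfrac{3}{2}S^2-2\bigl[(|a|^2-|b|^2)^2+4(a\cdot b)^2\bigr]=\tfrac{3}{2}S^2-2|\Phi|^2,
\]
where $\Phi:=\sum_\alpha(a^\alpha-ib^\alpha)^2=(|a|^2-|b|^2)-2i(a\cdot b)$. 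So the theorem reduces to showing $\Phi\equiv 0$ at every point of $M$.

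Next I would recognize $\Phi$ as the coefficient of a globally defined section of the fourth power $K^4$ of the canonical bundle of the induced conformal structure on $M$. A short calculation shows $a-ib\mapsto e^{2i\theta}(a-ib)$ under a rotation of the tangent frame by $\theta$, and hence $\Phi\mapsto e^{4i\theta}\Phi$; under any orthogonal change of the normal frame $\Phi$ is invariant, since it is a symmetric sum of squares. Therefore $\Phi\,(dz)^4$ is a well-defined global section of $K^4$.

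The key step is then to prove that $\Phi\,(dz)^4$ is holomorphic. In an isothermal coordinate $z$ with conformal factor $\lambda$, the $(2,0)$-component $h_{zz}=h(\partial_z,\partial_z)$ is a normal-bundle-valued section. The Codazzi equation \eqref{Cod} together with minimality ($h_{z\bar z}=0$) yields $\nabla^\perp_{\bar z}h_{zz}=0$ \emph{without} any flatness assumption on the normal bundle---this is precisely the strengthening of Yang's observation. Since the normal connection preserves the complex-bilinear pairing on the complexified normal bundle, $\partial_{\bar z}\langle h_{zz},h_{zz}\rangle_{\mathbb{C}}=0$; and since $\langle h_{zz},h_{zz}\rangle_{\mathbb{C}}$ is a positive multiple of $\lambda^2\Phi$, the section $\Phi\,(dz)^4$ is indeed holomorphic.

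Finally, since $M$ is closed with $K>0$, the Gauss--Bonnet formula forces $\chi(M)>0$, so $M$ is topologically $\sphere^2$ (the non-orientable case $\mathbb{RP}^2$ reduces to this via its orientable double cover). As $K^4$ has degree $-8$ on $\sphere^2$, we have $H^0(\sphere^2,K^4)=0$, forcing $\Phi\equiv 0$, which finishes the proof. The main technical hurdle will be verifying the covariant holomorphicity $\nabla^\perp_{\bar z}h_{zz}=0$ and the parallelism of the complex-bilinear pairing on the complexified normal bundle without appealing to flatness of $\nabla^\perp$---this is exactly the point where Yang's original frame-based argument needed the extra hypothesis.
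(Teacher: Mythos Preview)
Your proposal is correct and takes a genuinely different route from the paper's proof. Both arguments reduce the theorem to the pointwise identity $|A|^2+\rho^\perp=\tfrac{3}{2}S^2$ (equivalently $a\perp b$ and $|a|=|b|$) and then invoke the Simons identity~\eqref{SId}, but they establish that identity differently. The paper proceeds by an integral squeeze: it expands the divergence $(h_{ijk}^\alpha\tri h_{ij}^\alpha)_k$ and integrates to obtain $\int_M (2-S)\bigl(-S^2+|A|^2+\rho^\perp\bigr)\geq\int_M\tfrac12(2-S)S^2$, while the direct algebraic inequality $-S^2+|A|^2+\rho^\perp\leq\tfrac12 S^2$ combined with $K>0\Rightarrow 2-S>0$ gives the reverse inequality, forcing equality everywhere. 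You instead identify the obstruction $\Phi=\sum_\alpha(a^\alpha-ib^\alpha)^2$ with (a conformal multiple of) $\langle h_{zz},h_{zz}\rangle_{\mathbb{C}}$, show via Codazzi plus minimality that $\nabla^\perp_{\bar z}h_{zz}=0$---valid without any normal-bundle flatness---and conclude that the quartic differential is holomorphic, hence zero on $\sphere^2$. Your approach is more conceptual and exactly the ``method of holomorphic functions'' that the paper itself flags in the remark following the first Corollary (citing Guadalupe--Rodriguez~\cite{11}) as an alternative proof; the paper's argument, by contrast, keeps everything in the Simons-type integral-inequality framework that drives the rest of the article. One small notational point: the globally well-defined section of $K^4$ is $\langle h_{zz},h_{zz}\rangle_{\mathbb{C}}\,dz^4$, which equals $\tfrac{\lambda^4}{4}\Phi\,dz^4$ in an isothermal chart with conformal factor $\lambda$; this does not affect your vanishing argument, since $\Phi=0$ iff the quartic differential vanishes.
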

\begin{proof}
Let $\{e_1,\cdots,e_N\}$ be a local orthonormal frame field on $M$ as before, and we have
\begin{equation}\label{Div}
\begin{aligned}
(h_{ijk}^\alpha\tri h_{ij}^\alpha)_k&=\sum_{i,j,\alpha}(\tri h_{ij}^\alpha)^2+h_{ijk}^\alpha h_{pik}^\alpha R_{pljl}+h_{ijk}^\alpha h_{lpk}^\alpha R_{pijl}+h_{ijk}^\alpha h_{pi}^\alpha R_{pljlk}\\
&\hspace{1.3em}+h_{ijk}^\alpha h_{lp}^\alpha R_{pijlk}+h_{ijk}^\alpha h_{lik}^\delta R_{\delta\alpha jl}+h_{ijk}^\alpha h_{li}^\delta R_{\delta\alpha jlk}.\\
\end{aligned}
\end{equation}
By \eqref{Lap1}, \eqref{Curv1} and \eqref{Curv2}, we have
\begin{equation*}
\begin{aligned}
\tri h_{11}^\beta&=a^\beta(2-S)+b^\delta (2a^\delta b^\beta-2a^\beta b^\delta),\\
\tri h_{12}^\beta&=b^\beta(2-S)+a^\delta(2a^\beta b^\delta-2a^\delta b^\beta).
\end{aligned}
\end{equation*}
The first term on the right-hand side of \eqref{Div} becomes
\begin{equation}\label{1st}
\begin{aligned}
\sum_{i,j,\alpha}(\tri h_{ij}^\alpha)^2&=2\sum_{\alpha}(\tri a^\alpha)^2+2\sum_{\alpha}(\tri b^\alpha)^2\\
&=S(2-S)^2+\frac{1}{2}(5S-8)(-S^2+|A|^2+\rho^{\bot}).
\end{aligned}
\end{equation}
By \eqref{Curv1} and \eqref{Curv2}, the second, the third and the sixth terms are
\begin{equation}\label{2nd}
\begin{aligned}
&\hspace{1.3em}h_{ijk}^{\alpha}h_{pik}^{\alpha}R_{pljl}+h_{ijk}^{\alpha}h_{lpk}^{\alpha}R_{pijl}+h_{ijk}^{\alpha}h_{lik}^{\delta}R_{\delta\alpha jl}\\
&=(2-S)\mathcal{B}_1+\sum_{\alpha,\delta}4(a_1^{\alpha}a_2^{\delta}-a_2^{\alpha}a_1^{\delta})R_{\delta\alpha 12}\\
&=(2-S)\mathcal{B}_1+16\l a,a_2\r\l b,a_1\r-16\l a,a_1\r\l b,a_2\r.
\end{aligned}
\end{equation}
By \eqref{Curv1}, the fourth and the fifth terms are equal to
\begin{equation}\label{3rd}
h_{ijk}^{\alpha}h_{pi}^{\alpha}R_{pljlk}+h_{ijk}^{\alpha}h_{lp}^{\alpha}R_{pijlk}=-h_{ij}^{\alpha}h_{ijk}^{\alpha}S_k=-\frac{1}{2}|\nabla S|^2.
\end{equation}
By \eqref{Curv3}, the last term becomes
\begin{equation}\label{4th}
\begin{aligned}
h_{ijk}^{\alpha}h_{li}^{\delta}R_{\delta\alpha jlk}&=2(b^{\delta}a_1^{\alpha}-a^{\delta}a_2^{\alpha})R_{\delta\alpha 121}+2(a^{\delta}a_1^{\alpha}+b^{\delta}a_2^{\alpha})R_{\delta\alpha 122}\\
&=-\frac{1}{2}S\mathcal{B}_1+\frac{1}{4}|\nabla S|^2+16\l a,a_2\r\l b,a_1\r-16\l a,a_1\r\l b,a_2\r.
\end{aligned}
\end{equation}
Combining \eqref{Div}, \eqref{1st}, \eqref{2nd}, \eqref{3rd} and \eqref{4th}, we get
\begin{equation}\label{Diff}
\begin{aligned}
(h_{ijk}^\alpha\tri h_{ij}^\alpha)_k&=\frac{1}{2}(4-3S)\mathcal{B}_1+(2-S)^2S+\frac{1}{2}(5S-8)(-S^2+|A|^2+\rho^\bot)\\
&\hspace{1.3em}-\frac{1}{4}|\nabla S|^2+32\l a,a_2\r\l b,a_1\r-32\l a,a_1\r\l b,a_2\r\\
&=\frac{1}{2}(2-S)S^2+(2-S)(S^2-|A|^2-\rho^{\bot})+\tri S-\frac{3}{8}\tri S^2\\
&\hspace{1.3em}+8(\l a,a_2\r+\l b,a_1\r)^2+8(\l a,a_1\r-\l b,a_2\r)^2,
\end{aligned}
\end{equation}
where in the last equality, we used the equation \eqref{SId} and the relation $$S\tri S=\frac{1}{2}\tri S^2-|\nabla S|^2.$$ Integration over $M$ on both sides of \eqref{Diff} gives
\begin{equation*}
\begin{aligned}
0&=\int_M\left(\frac{1}{2}S^2(2-S)-(2-S)(-S^2+|A|^2+\rho^{\bot})\right)+\int_M8(\l a,a_2\r+\l b,a_1\r)^2+8(\l a,a_1\r-\l b,a_2\r)^2).
\end{aligned}
\end{equation*}
Therefore,
\begin{equation}\label{LIneq}
\int_M(2-S)(-S^2+|A|^2+\rho^{\bot})\geq\int_M\frac{1}{2}(2-S)S^2.
\end{equation}
On the other hand, we have the following relation by direct computation:
\begin{equation}\label{PIneq}
-S^2+|A|^2+\rho^{\bot}\leq\frac{1}{2}S^2,
\end{equation}
where the equality holds if and only if $a\perp b$ and $|a|=|b|$. The positivity of Gaussian curvature implies that $S<2$. Therefore,
\begin{equation}\label{RIneq}
\int_M(2-S)(-S^2+|A|^2+\rho^{\bot})\leq\int_M\frac{1}{2}(2-S)S^2.
\end{equation}
By \eqref{LIneq}, \eqref{PIneq} and \eqref{RIneq}, we get
\begin{equation}\label{equ}
-S^2+|A|^2+\rho^{\bot}=\frac{1}{2}S^2.
\end{equation}
Combining \eqref{SId} and \eqref{equ}, we complete the proof.
\end{proof}

\begin{cor}
Suppose that $M$ is a closed surface minimally immersed in a unit sphere $\mathbb{S}^N(1)$ with positive Gaussian curvature. Under the foregoing assumptions and notations, we have
\begin{align}
\l a,b\r&=0,~|a|^2=|b|^2=\frac{1}{4}S,\label{Funda}\\
\tri a&=\frac{1}{2}a(4-3S),~\tri b=\frac{1}{2}b(4-3S),\label{Twins1}\\
\tri a_1&=\frac{1}{2}a_1(14-9S)+\frac{7}{4}(-aS_1+bS_2),\label{Dual1}\\
\tri a_2&=\frac{1}{2}a_2(14-9S)-\frac{7}{4}(bS_1+aS_2),\label{Dual2}\\ \nonumber
|A|^2&=\frac{1}{2}S^2~\text{and}~\rho^\bot=S^2.
\end{align}
\end{cor}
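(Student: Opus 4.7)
The plan is to harvest the content of the corollary from within the proof of Theorem \ref{NSId}. First, recall that that proof established the \emph{pointwise} equality $-S^2+|A|^2+\rho^\bot=\tfrac{1}{2}S^2$, together with the equality characterization of the algebraic inequality (\ref{PIneq}). Thus, pointwise, $\l a,b\r=0$ and $|a|=|b|$; combined with the definitional identity $|h|^2 = 2|a|^2+2|b|^2 = S$ (which uses minimality via $h_{22}^\alpha=-h_{11}^\alpha$), this gives $|a|^2=|b|^2=S/4$. Expanding $|A|^2 = |2aa^{\text{T}}+2bb^{\text{T}}|^2 = 4|a|^4+8\l a,b\r^2+4|b|^4$ then yields $|A|^2 = \tfrac{1}{2}S^2$, whence $\rho^\bot = \tfrac{3}{2}S^2 - |A|^2 = S^2$.

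Next, to derive (\ref{Twins1}), I would substitute these relations into the explicit expressions for $\triangle h_{11}^\beta$ and $\triangle h_{12}^\beta$ already displayed in the proof of Theorem \ref{NSId} (which come from (\ref{Lap1}), (\ref{Curv1}) and (\ref{Curv2})). For instance, $\triangle h_{11}^\beta = a^\beta(2-S)+2\l a,b\r b^\beta - 2|b|^2 a^\beta$ collapses, via $\l a,b\r=0$ and $|b|^2=S/4$, to $\tfrac{1}{2}a^\beta(4-3S)$, and the formula for $\triangle b$ is analogous.

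The substantive work is (\ref{Dual1}) and (\ref{Dual2}). Here the strategy is to apply the general identity (\ref{Lap2}) with $i=j=1$ and $k=1$ or $2$. The leading term $(\triangle h_{11}^\alpha)_k$ is handled by covariantly differentiating the newly-established $\triangle a = \tfrac{1}{2}a(4-3S)$, producing $\tfrac{1}{2}a_k^\alpha(4-3S)-\tfrac{3}{2}a^\alpha S_k$. The remaining seven terms in (\ref{Lap2}) are pure curvature contributions; they are expanded via (\ref{Curv1})--(\ref{Curv3}) and then simplified using the relations just proved, together with the auxiliary pointwise identities $\l a,a_1\r=\l b,a_2\r=S_1/8$ and $\l a,a_2\r=-\l b,a_1\r=S_2/8$. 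These last identities arise from covariantly differentiating $|a|^2=|b|^2=S/4$ and $\l a,b\r=0$ once, combined with the Codazzi/trace-free relations $h_{121}^\alpha=a_2^\alpha$ and $h_{122}^\alpha=-a_1^\alpha$.

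The main obstacle will be the bookkeeping in this final step: (\ref{Lap2}) contains eight terms, and several involve first covariant derivatives of the Riemann and normal curvature tensors, which must themselves be computed from (\ref{Curv1}) and (\ref{Curv3}). The arithmetic is routine but dense; after organizing the result by monomials in $a^\alpha, b^\alpha, a_1^\alpha, a_2^\alpha$, one expects the coefficients to collapse precisely as required to yield (\ref{Dual1}) and (\ref{Dual2}).
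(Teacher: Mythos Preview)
Your proposal is correct and follows essentially the same approach as the paper: you extract \eqref{Funda} from the equality case of \eqref{PIneq} established in the proof of Theorem~\ref{NSId}, derive \eqref{Twins1} by specializing the displayed formulas for $\triangle h_{11}^\beta$, $\triangle h_{12}^\beta$ there, and obtain \eqref{Dual1}--\eqref{Dual2} by expanding \eqref{Lap2} with the curvature identities \eqref{Curv1}--\eqref{Curv3} and the relations of \eqref{Funda}. The only cosmetic difference is that the paper computes $\rho^\bot$ directly from its definition rather than via the relation $|A|^2+\rho^\bot=\tfrac{3}{2}S^2$, and the auxiliary identities $\l a,a_k\r,\l b,a_k\r$ that you rightly flag as needed are stated separately in the paper as the subsequent Corollary~\ref{rem3}.
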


\begin{proof}
On one hand, \eqref{Funda} is obtained by \eqref{Diff} and \eqref{equ} in the proof of Theorem \ref{NSId}. On the other hand, \eqref{Twins1} can be calculated directly by \eqref{Lap1}, \eqref{Curv1}, \eqref{Curv2} and \eqref{Funda}. Also, \eqref{Dual1} and \eqref{Dual2} can be calculated directly by \eqref{Lap2}, \eqref{Curv1}, \eqref{Curv2}, \eqref{Curv3} and \eqref{Funda} in a similar way. Finally, by definition, we have
\begin{equation*}
\begin{aligned}
|A|^2&=\sum(2a^\alpha a^\beta+2b^\alpha b^\beta)^2\\
&=\sum(4|a|^2|a|^2+4|b|^2|b|^2+8\l a,b\r^2)\\
&=8|a|^4\\
&=\frac{1}{2}S^2
\end{aligned}
\end{equation*}
and
\begin{equation*}
\begin{aligned}
\rho^\bot=\sum|[S_\alpha,S_\beta]|^2&=8\sum(a^\alpha b^\beta-a^\beta b^\alpha)^2\\
&=8\sum(|a|^2|b|^2+|a|^2|b|^2-2\l a,b\r^2)\\
&=16|a|^4\\
&=S^2,
\end{aligned}
\end{equation*}
which prove the corollary.
\end{proof}

\begin{rem}
Theorem \ref{NSId} can also be proved by using the method of holomorphic functions~\emph{(}cf. \emph{\cite{11})}. We will use this method to prove Lemma \ref{FirD} and Proposition \ref{SeconD} in the following.
\end{rem}

\begin{lem}\label{FirD}
Suppose that $M$ is a closed surface minimally immersed in a unit sphere $\mathbb{S}^N(1)$ with positive Gaussian curvature. We have
\begin{equation*}
\l a_1,a_2\r=0\text{ and }|a_1|^2=|a_2|^2=\frac{1}{8}\mathcal{B}_1.
\end{equation*}
\end{lem}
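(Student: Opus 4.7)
The plan is to follow the method of holomorphic functions hinted at in the Remark after Theorem~\ref{NSId}. The starting observation is that the identities $\langle a,b\rangle=0$ and $|a|^2=|b|^2=\tfrac{1}{4}S$ of \eqref{Funda} are precisely the vanishing of $\sum_\alpha(\psi^\alpha)^2$ for the complex section $\psi^\alpha\coloneqq a^\alpha-ib^\alpha$, which (up to normalization) is the coefficient of the $(2,0)$-part of $h$ in a local isothermal coordinate $z=x_1+ix_2$. In the complex frame $e_{\pm}=\tfrac{1}{\sqrt 2}(e_1\mp i e_2)$, the Codazzi symmetry from \eqref{Cod} (total symmetry of $h_{ijk}^\alpha$) combined with the minimality traces $h_{11k}^\alpha+h_{22k}^\alpha=0$ forces all mixed components $h_{+-k}^\alpha$ of $\nabla h$ to vanish, leaving only $h_{+++}^\alpha=\sqrt{2}\,(a_1^\alpha-ia_2^\alpha)$ and its conjugate $h_{---}^\alpha$ as nonzero.

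The heart of the argument is to verify that $h_{+++}^\alpha\,dz^3$ is a holomorphic section of $K^{\otimes 3}\otimes N_{\mathbb{C}}M$ on $M$. This is classical for minimal surfaces in an ambient space form: differentiating once more in the antiholomorphic direction and commuting derivatives via the Ricci formula \eqref{Ric1}, the resulting commutator is expressed through the ambient curvature \eqref{Curv1} and terms of type $h_{+-k}^\alpha$; the former cancels because the ambient has constant sectional curvature, while the latter vanish by minimality. Extending the real fibre metric of $NM$ $\mathbb{C}$-bilinearly, the scalar
\[
F \;\coloneqq\; \sum_\alpha (h_{+++}^\alpha)^2 \;=\; 2\bigl(|a_1|^2-|a_2|^2-2i\langle a_1,a_2\rangle\bigr)
\]
then descends to a globally defined holomorphic section of $K^{\otimes 6}$ on $M$.

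Since $M$ is a closed surface with $K>0$, Gauss--Bonnet forces $M\cong\mathbb{S}^2$ (or $\mathbb{RP}^2$, handled by passing to its orientable double cover). On $\mathbb{S}^2=\mathbb{CP}^1$ the line bundle $K^{\otimes 6}$ has degree $-12<0$ and admits no nontrivial holomorphic section, so $F\equiv 0$. Reading off real and imaginary parts immediately yields $|a_1|^2=|a_2|^2$ and $\langle a_1,a_2\rangle=0$. To finish, a direct bookkeeping over the eight triples $(i,j,k)\in\{1,2\}^3$, using Codazzi together with $h_{11k}^\alpha=-h_{22k}^\alpha$, gives $\mathcal{B}_1=4(|a_1|^2+|a_2|^2)$, whence $|a_1|^2=|a_2|^2=\tfrac{1}{8}\mathcal{B}_1$ as claimed. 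The main obstacle is the holomorphicity step: one has to carefully track the induced connection on $K^{\otimes 3}\otimes N_{\mathbb{C}}M$ and confirm that the curvature corrections arising from \eqref{Ric1} are absorbed by the constant-curvature identity \eqref{Curv1} together with the vanishing of the mixed components $h_{+-k}^\alpha$. The remaining pieces---extracting the real/imaginary parts of $F$, the combinatorial identification of $\mathcal{B}_1$, and the topological reduction to $\mathbb{S}^2$---are routine.
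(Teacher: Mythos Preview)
Your overall strategy matches the paper's: build the sextic differential $\phi=(|a_1|^2-|a_2|^2-2i\langle a_1,a_2\rangle)\,dz^6$ (your $F$, up to a constant), prove it is holomorphic, and use $M\cong\mathbb{S}^2$ to conclude $\phi\equiv 0$. The gap is in the holomorphicity step. You assert that $h_{+++}^\alpha\,dz^3$ is a holomorphic section of $K^{\otimes 3}\otimes N_{\mathbb{C}}M$, with the commutator from \eqref{Ric1} absorbed by the constant tangential curvature \eqref{Curv1} and the vanishing of $h_{+-k}^\alpha$. But \eqref{Ric1} carries a third piece, the \emph{normal} curvature term $h_{ij}^\beta R_{\beta\alpha kl}$ coming from \eqref{Curv2}, which you omit. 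After the tangential contributions drop out one is left with $h_{+++;-}^\alpha=h_{++}^\beta R_{\beta\alpha+-}$, and this is nonzero in general; the cubic normal-bundle-valued differential is \emph{not} $\bar\partial$-closed for an arbitrary minimal surface in a sphere.

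The scalar $F=\sum_\alpha(h_{+++}^\alpha)^2$ \emph{is} holomorphic, but establishing this requires the isotropy relations \eqref{Funda} (equivalently $\sum_\alpha(h_{++}^\alpha)^2=0$) already obtained in Theorem~\ref{NSId}: using $\langle a,b\rangle=0$, $|a|^2=|b|^2$ and their first derivatives one checks that $\sum_\alpha h_{+++}^\alpha\,h_{++}^\beta R_{\beta\alpha+-}=0$. The paper avoids the bundle-valued detour altogether and verifies the Cauchy--Riemann equations for the scalar $\phi$ directly via \eqref{Twins1} (which itself is derived using \eqref{Funda}): one gets $e_1(|a_1|^2-|a_2|^2)+e_2(2\langle a_1,a_2\rangle)=2\langle a_1,\triangle a\rangle-2\langle a_2,\triangle b\rangle=0$. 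Your plan is therefore salvageable, but the justification must invoke \eqref{Funda} or \eqref{Twins1}, not merely constant ambient curvature and minimality.
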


\begin{proof}
Define $$\phi\coloneqq(|a_1|^2-|a_2|^2-2\operatorname{i}\l a_1,a_2\r)dz^6.$$ Then, $\phi$ is a differential form of degree $6$. It can be verified that $\phi$ is independent of the choice of the vector field. We now prove that $\phi$ is actually holomorphic by showing that it satisfies Cauchy-Riemann equations. First we have
\begin{align*}
&\hspace{1.3em}e_1(|a_1|^2-|a_2|^2)+e_2(2\l a_1,a_2\r)\\
&=2(\l a_1,a_{11}\r-\l a_2,a_{21}\r)+2(\l a_{12},a_2\r+\l a_1,a_{22}\r)\\
&=2(\l a_1,\tri a\r-\l a_2,\tri b\r)\\
&=0,
\end{align*} 
where we used \eqref{Cod} and \eqref{Twins1} in the last equation. By the same process, it can be shown that $e_2(|a_1|^2-|a_2|^2)-e_1(2\l a_1,a_2\r)=0$. Therefore, $\phi$ is holomorphic. But the holomorphic differential on a $2$-dimensional sphere must be zero. The result follows.
\end{proof}

\begin{cor}\label{rem3}
Suppose that $M$ is a closed surface minimally immersed in a unit sphere $\mathbb{S}^N(1)$ with positive Gaussian curvature. Under the foregoing assumptions and notations, we have
\begin{enumerate}
\item[\emph{(i)}]
\begin{equation}\label{deriv01}
\begin{aligned}
\l a,a_1\r&=\l b,a_2\r=\frac{1}{8}S_1,\\
\l a,a_2\r&=-\l b,a_1\r=\frac{1}{8}S_2,
\end{aligned}
\end{equation}
\item[\emph{(ii)}]
\begin{equation}\label{deriv02}
\begin{aligned}
\l a,a_{11}\r&=\l b,a_{21}\r=\frac{1}{8}(S_{11}-\mathcal{B}_1),\\
\l a,a_{22}\r&=-\l b,a_{12}\r=\frac{1}{8}(S_{22}-\mathcal{B}_1),\\
\l a,a_{12}\r&=\l b,a_{22}\r=\frac{1}{8}S_{12},\\
\l a,a_{21}\r&=-\l b,a_{11}\r=\frac{1}{8}S_{21},
\end{aligned}
\end{equation}
\item[\emph{(iii)}] and
\begin{equation}\label{deriv12}
\begin{aligned}
\l a_1,a_{21}\r&=-\l a_2,a_{11}\r,\\
\l a_1,a_{22}\r&=-\l a_2,a_{12}\r,\\
\l a_1,a_{11}\r&=\l a_2,a_{21}\r=\frac{1}{16}(\mathcal{B}_1)_1,\\
\l a_1,a_{12}\r&=\l a_2,a_{22}\r=\frac{1}{16}(\mathcal{B}_1)_2.
\end{aligned}
\end{equation}
\end{enumerate}
\end{cor}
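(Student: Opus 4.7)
The plan is to derive all the identities by iteratively differentiating the fundamental relations \eqref{Funda} together with Lemma \ref{FirD}, working at a point $p$ where the chosen tangent and normal frames are adapted so that $\omega_{AB}(p)=0$, making covariant differentiation agree with directional differentiation of frame components at $p$. The essential bridge between derivatives of $a$ and derivatives of $b$ comes from the Codazzi symmetry $h_{ijk}^\alpha = h_{jik}^\alpha = h_{ikj}^\alpha$ combined with minimality $h_{22}^\alpha = -h_{11}^\alpha$: unfolding $b_i^\alpha = h_{12i}^\alpha$ gives $b_1 = h_{121} = h_{112} = a_2$ and $b_2 = h_{122} = h_{221} = -h_{111} = -a_1$. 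The identities we will reuse throughout are $|a|^2 = |b|^2 = S/4$ and $\langle a, b\rangle = 0$ from \eqref{Funda}, and $|a_1|^2 = |a_2|^2 = \mathcal{B}_1/8$ and $\langle a_1, a_2\rangle = 0$ from Lemma \ref{FirD}.

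For part (i), differentiating $|a|^2 = S/4$ along $e_k$ gives $2\langle a, a_k\rangle = S_k/4$, which yields the formulas for $\langle a, a_k\rangle$. For the $b$-version, differentiate $\langle a, b\rangle = 0$ along $e_k$ and substitute $b_1 = a_2$, $b_2 = -a_1$ to obtain $\langle b, a_2\rangle = \langle a, a_1\rangle = S_1/8$ and $\langle b, a_1\rangle = -\langle a, a_2\rangle = -S_2/8$. For part (ii), apply $e_k$ to each equality $\langle a, a_l\rangle = S_l/8$ to get $\langle a_k, a_l\rangle + \langle a, a_{kl}\rangle = S_{kl}/8$; feeding in $|a_i|^2 = \mathcal{B}_1/8$ and $\langle a_1, a_2\rangle = 0$ isolates $\langle a, a_{kl}\rangle$ in each of the four cases and produces the stated identities. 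The corresponding $\langle b, a_{kl}\rangle$ equalities are obtained by differentiating the part (i) formulas for $\langle b, a_k\rangle$ along $e_l$ and once again invoking $b_1 = a_2$, $b_2 = -a_1$ to rewrite $\langle b_l, a_k\rangle$ in terms of inner products of $a_1$ and $a_2$.

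For part (iii), differentiating $\langle a_1, a_2\rangle = 0$ along $e_1$ and $e_2$ immediately gives the antisymmetry relations $\langle a_1, a_{21}\rangle = -\langle a_2, a_{11}\rangle$ and $\langle a_1, a_{22}\rangle = -\langle a_2, a_{12}\rangle$, while differentiating $|a_1|^2 = \mathcal{B}_1/8$ and $|a_2|^2 = \mathcal{B}_1/8$ along $e_1$ and $e_2$ produces the remaining four identities expressed in terms of $(\mathcal{B}_1)_1$ and $(\mathcal{B}_1)_2$. There is no analytic obstacle at any stage; the entire corollary is bookkeeping once the Codazzi-minimality substitutions $b_1 = a_2$ and $b_2 = -a_1$ are in place, and these substitutions are precisely what let every $b$-derivative be rewritten in terms of $a$, $a_1$, $a_2$, so that all quantities on the left-hand sides of \eqref{deriv01}, \eqref{deriv02} and \eqref{deriv12} can be read off from \eqref{Funda} and Lemma \ref{FirD}.
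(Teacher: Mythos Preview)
Your proposal is correct and follows essentially the same approach as the paper: both derive (i) by differentiating \eqref{Funda}, (ii) by differentiating (i) and invoking Lemma \ref{FirD}, and (iii) by differentiating the equations of Lemma \ref{FirD}. Your write-up is in fact more explicit than the paper's, since you spell out the Codazzi--minimality identification $b_1=a_2$, $b_2=-a_1$ that underlies the passage from $b$-derivatives to $a$-derivatives, a step the paper leaves to the reader.
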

\begin{proof}
First, \eqref{deriv01} can be obtained by differentiating \eqref{Funda}. Second, by differentiating \eqref{deriv01} and using Lemma \ref{FirD}, we can obtain \eqref{deriv02}. Finally, \eqref{deriv12} can be obtained by differentiating the equations in Lemma \ref{FirD}.
\end{proof}
Similar to Lemma \ref{FirD}, we  obtain the following proposition.
\begin{prop}\label{SeconD}
Suppose that $M$ is a closed surface minimally immersed in a unit sphere $\mathbb{S}^N(1)$ with positive Gaussian curvature. We have
\begin{enumerate}
\item[\emph{(i)}] $\l a_{11},a_{21}\r=0$ and $$|a_{11}|^2=|a_{21}|^2=\frac{1}{16}\mathcal{B}_2-\frac{1}{32}(3S-4)(S_{11}-S_{22}),$$
\item[\emph{(ii)}] $\l a_{22},a_{12}\r=0$ and $$|a_{22}|^2=|a_{12}|^2=\frac{1}{16}\mathcal{B}_2+\frac{1}{32}(3S-4)(S_{11}-S_{22}).$$
\end{enumerate}
\end{prop}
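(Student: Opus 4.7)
The plan is to mimic the proof of Lemma \ref{FirD}: exhibit a holomorphic differential on $M$ built from the inner products appearing in the statement, and conclude that it vanishes since $M$ is topologically $\mathbb{S}^{2}$ (by Gauss--Bonnet and positivity of $K$), which carries no nonzero holomorphic differentials of positive degree.

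For part (i), I would introduce
$$\psi_{1}\coloneqq\bigl[(|a_{11}|^{2}-|a_{21}|^{2})-2\operatorname{i}\langle a_{11},a_{21}\rangle\bigr]\,dz^{8},$$
the degree $8$ reflecting that $a_{11}-\operatorname{i}a_{21}$ transforms, modulo a ``lower-weight'' correction by $a-\operatorname{i}b$, with weight $4$ under rotations of the tangent frame. First one checks that the coefficient of $dz^{8}$ is frame-independent, using the Ricci-type identity
$$a_{12}-a_{21}=\tfrac{3S-4}{2}\,b,$$
obtained directly from \eqref{Ric1}, \eqref{Curv1}, \eqref{Curv2} together with \eqref{Funda}, along with the Laplacian \eqref{Twins1} and the isotropy $|a|^{2}=|b|^{2}$, $\langle a,b\rangle=0$ of \eqref{Funda}: the spin-$2$ contribution to $a_{11}-\operatorname{i}a_{21}$ (namely $\tfrac{4-3S}{4}(a-\operatorname{i}b)$) drops out of the inner square thanks to $\langle a-\operatorname{i}b,\,a-\operatorname{i}b\rangle=0$ and an orthogonality that follows from \eqref{deriv02}. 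Next one verifies the Cauchy--Riemann equations
$$e_{1}(|a_{11}|^{2}-|a_{21}|^{2})=-2\,e_{2}\langle a_{11},a_{21}\rangle,\qquad e_{2}(|a_{11}|^{2}-|a_{21}|^{2})=2\,e_{1}\langle a_{11},a_{21}\rangle,$$
at a point where the tangent connection vanishes, by expanding into inner products against the third covariant derivatives $a_{ijk}=h_{11ijk}$ and invoking \eqref{Ric1}--\eqref{Ric2}, \eqref{Curv1}--\eqref{Curv3}, \eqref{Dual1}--\eqref{Dual2}, Lemma \ref{FirD}, and Corollary \ref{rem3}.

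Once $\psi_{1}\equiv 0$ is established, $|a_{11}|^{2}=|a_{21}|^{2}$ and $\langle a_{11},a_{21}\rangle=0$ follow at once. The common value is then determined by combining
$$|a_{11}|^{2}+|a_{12}|^{2}+|a_{21}|^{2}+|a_{22}|^{2}=\tfrac{1}{4}\mathcal{B}_{2},$$
a direct consequence of the symmetry of $h_{ijkl}$ in $(i,j,k)$ together with $h_{22kl}=-h_{11kl}$ and $h_{12kl}=h_{21kl}$, with the two differences
$$|a_{11}|^{2}-|a_{22}|^{2}=-\tfrac{(3S-4)(S_{11}-S_{22})}{16},\qquad |a_{12}|^{2}-|a_{21}|^{2}=\tfrac{(3S-4)(S_{11}-S_{22})}{16},$$
each obtained in one step via the polarization $|X|^{2}-|Y|^{2}=\langle X+Y,\,X-Y\rangle$ using $a_{11}+a_{22}=\tfrac{4-3S}{2}a$ (from \eqref{Twins1}), $a_{12}-a_{21}=\tfrac{3S-4}{2}b$, and the inner products in \eqref{deriv02}.

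Part (ii) is then automatic: writing $a_{22}=\tfrac{4-3S}{2}a-a_{11}$ and $a_{12}=a_{21}+\tfrac{3S-4}{2}b$ and expanding $\langle a_{22},a_{12}\rangle$, the cross terms cancel via \eqref{Funda} and \eqref{deriv02} to give $\langle a_{22},a_{12}\rangle=-\langle a_{11},a_{21}\rangle=0$, while the explicit value for $|a_{22}|^{2}=|a_{12}|^{2}$ follows from the identities above. The main obstacle is the Cauchy--Riemann verification in the middle step: it requires a structured but intricate cancellation among many curvature-induced terms coming from the noncommutativity of higher covariant derivatives on a curved surface with nontrivial normal bundle, which is precisely the difficulty that Yang's original argument sidestepped by assuming flat normal bundle.
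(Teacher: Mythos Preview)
Your approach is essentially the paper's: you define the same degree-$8$ differential $\psi_{1}=\phi_{1}$, verify holomorphicity via the Cauchy--Riemann equations, and use $M\simeq\mathbb{S}^{2}$ to force it to vanish. The only variation is that the paper handles part~(ii) by introducing a second differential $\phi_{2}\coloneqq(|a_{22}|^{2}-|a_{12}|^{2}+2\operatorname{i}\langle a_{22},a_{12}\rangle)\,dz^{8}$, whereas you (correctly and a bit more economically) derive~(ii) algebraically from~(i) via $a_{22}=\tfrac{4-3S}{2}a-a_{11}$ and $a_{12}=a_{21}+\tfrac{3S-4}{2}b$; your worry about the Cauchy--Riemann step is overblown, since after the Ricci commutation it collapses to $2\langle a_{11},\triangle a_{1}\rangle-2\langle a_{21},\triangle a_{2}\rangle$ (the correction terms cancel by Corollary~\ref{rem3}) and then \eqref{Dual1}--\eqref{Dual2} finish it in one line.
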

\begin{proof}
Define $$\phi_1\coloneqq(|a_{11}|^2-|a_{21}|^2-2\operatorname{i}\l a_{11},a_{21}\r)dz^8.$$ Then $\phi_1$ is a differential form of degree $8$. It can be verified that $\phi_1$ is independent of the choice of the vector field. We now prove that $\phi_1$ is actually holomorphic by showing that it satisfies Cauchy-Riemann equations. First we have
\begin{align*}
&\hspace{1.3em}e_1(|a_{11}|^2-|a_{21}|^2)+e_2(2\l a_{11},a_{21}\r)\\
&=2\l a_{11},a_{111}\r+2\l a_{11},a_{212}\r+2\l a_{21},a_{112}\r-2\l a_{21},a_{211}\r\\
&=2\l a_{11},\tri a_1\r-2\l a_{21},\tri a_2\r\\
&=0,
\end{align*}
where we used \eqref{Dual1}, \eqref{Dual2} and the relations
\begin{align*}
a_{212}^\alpha &=a_{122}^\alpha+\frac{3}{2}a_1^\alpha S-\frac{3}{2}b^\alpha S_2-2a_1^\alpha,\\
-a_{112}^\alpha &=a_{222}^\alpha+\frac{3}{2}a_2^\alpha S+\frac{3}{2}a^\alpha S_2-2a_2^\alpha,
\end{align*}
which can be obtained by using $(\ref{Ric1})$. By the same process, it can be shown that $$e_2(|a_{11}|^2-|a_{21}|^2)-e_1(2\l a_{11},a_{21}\r)=-2\l a_{11},\tri a_2\r-2\l a_{21},\tri a_1\r=0.$$ Therefore, $\phi_1$ is holomorphic. Similarly, by defining $$\phi_2\coloneqq(|a_{22}|^2-|a_{12}|^2+2\operatorname{i}\l a_{22},a_{12}\r)dz^8,$$ it can be shown that $\phi_2$ is also holomorphic. But the holomorphic differential on a $2$-dimensional sphere must be zero. Then we obtain $\l a_{11},a_{21}\r=0$, $\l a_{22},a_{12}\r=0$, $|a_{11}|=|a_{21}|$ and $|a_{22}|=|a_{12}|$. By \eqref{Ric1}, we have
\begin{equation*}
\begin{aligned}
a_{12}^\alpha-a_{21}^\alpha&=h_{p1}^\alpha R_{p112}+h_{1p}^\alpha R_{p112}+h_{11}^\beta R_{\beta\alpha 12}\\
&=b^\alpha(S-2)+2a^\beta(a^\beta b^\alpha-a^\alpha b^\beta)\\
&=\frac{1}{2}b^\alpha(3S-4).
\end{aligned}
\end{equation*}
Then, with Theorem \ref{NSId} and \eqref{deriv02} we obtain
\begin{equation*}
\begin{aligned}
|a_{12}|^2&=\l a_{12},a_{12}\r\\
&=\l a_{21}+\frac{1}{2}b(3S-4),a_{21}+\frac{1}{2}b(3S-4)\r\\
&=|a_{21}|^2+2\l a_{21},\frac{1}{2}b(3S-4)\r+\frac{1}{4}\l b(3S-4),b(3S-4)\r\\
&=|a_{21}|^2+\frac{1}{8}(3S-4)(S_{11}-\mathcal{B}_1)+\frac{1}{16}S(3S-4)^2\\
&=|a_{21}|^2+\frac{1}{16}(3S-4)(S_{11}-S_{22}).
\end{aligned}
\end{equation*}
Therefore,
\begin{equation*}
\begin{aligned}
\mathcal{B}_2&=\sum4\left[(h_{1111}^\alpha)^2+(h_{1112}^\alpha)^2+(h_{1121}^\alpha)^2+(h_{1122}^\alpha)^2\right]\\
&=8\left(|a_{12}|^2+|a_{21}|^2\right)\\
&=16|a_{21}|^2+\frac{1}{2}(3S-4)(S_{11}-S_{22})\\
&=16|a_{12}|^2-\frac{1}{2}(3S-4)(S_{11}-S_{22}),
\end{aligned}
\end{equation*}
which completes the proof.
\end{proof}

\section{The Simon conjecture in the cases $s=1,2$}
\subsection{The first Simons-type identity}
\begin{thm}\label{thm gap s=1}
Let $M$ be a closed minimal surface immersed in $\mathbb{S}^N(1)$ with positive Gaussian curvature. Then
\begin{equation*}
\int_MS(3S-4)=2\int_M\mathcal{B}_1\ge0.
\end{equation*}
In particular, if $0\leq S\leq \frac{4}{3}$, then $S=0$ or $S=\frac{4}{3}$.
\end{thm}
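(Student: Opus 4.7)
The plan is to derive both claims directly from Theorem \ref{NSId} by a single integration over $M$, followed by a pointwise sign argument for the pinching part.

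First, I would integrate the pointwise identity
\[
\frac{1}{2}\triangle S=\mathcal{B}_1-\frac{1}{2}S(3S-4)
\]
from Theorem \ref{NSId} over the closed surface $M$. Since $M$ is closed, $\int_M\triangle S=0$, so the integral of the right-hand side vanishes, yielding
\[
\int_M S(3S-4)=2\int_M \mathcal{B}_1.
\]
Non-negativity is immediate from $\mathcal{B}_1=|\nabla h|^2\ge 0$. This establishes the displayed identity and inequality.

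Next, for the ``in particular'' statement, I would first observe that the pinching $0\le S\le\tfrac{4}{3}$ forces $K=1-S/2\ge 1/3>0$, so the positive Gaussian curvature hypothesis of Theorem \ref{NSId} is automatically satisfied and the identity is applicable. Under the same pinching, the pointwise quantity $S(3S-4)$ satisfies $S(3S-4)\le 0$ on $M$, while $\int_M S(3S-4)\ge 0$ by the previous step. Hence both sides of the identity vanish, which gives $\mathcal{B}_1\equiv 0$ (so $h$ is parallel and $S$ is constant) and $S(3S-4)\equiv 0$; since $S$ is constant this forces $S\equiv 0$ or $S\equiv \tfrac{4}{3}$. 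Alternatively, without appealing to $\mathcal{B}_1\equiv 0$, the vanishing of $S(3S-4)$ puts $S(p)\in\{0,4/3\}$ at every point, and continuity of $S$ together with connectedness of $M$ yields the same dichotomy.

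The main obstacle has already been resolved upstream: Theorem \ref{NSId} is what removes the troublesome $|A|^2$ and $\rho^{\bot}$ terms from the raw Simons identity \eqref{SId}, leaving an expression in $S$ alone. Once that refined identity is in hand, the present theorem is essentially a one-line integration plus a sign-of-integrand argument, with the only point to verify being the (trivial) implication ``$S\le 4/3\Rightarrow K>0$'' that lets us invoke Theorem \ref{NSId} under the stated pinching hypothesis.
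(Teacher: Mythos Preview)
Your proposal is correct and follows essentially the same approach as the paper: integrate the refined Simons identity \eqref{TWINST} from Theorem \ref{NSId}, use $\mathcal{B}_1\ge 0$, and then the pointwise sign of $S(3S-4)$ under the pinching forces the integrand to vanish. Your additional remarks---that $0\le S\le\tfrac43$ already ensures $K\ge\tfrac13>0$, and the continuity/connectedness argument to pass from $S(p)\in\{0,4/3\}$ to $S$ being identically one of these values---are minor elaborations the paper leaves implicit, not a different route.
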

\begin{proof}
Integrating on both sides of \eqref{TWINST} and combining $\mathcal{B}_1\ge0$, we obtain
\begin{equation}\label{FirInt}
\frac{1}{2}\int_MS(3S-4)=\int_M\mathcal{B}_1\ge0.
\end{equation}
If $0\le S\le\frac{4}{3}$, then $S(3S-4)\le0$. Combining \eqref{FirInt}, we arrive at $S(3S-4)=0$. It follows that $S=0$ or $S=\frac{4}{3}$.
\end{proof}

\subsection{The second Simons-type identity}
\begin{thm}\label{thm gap s=2}
Let $M$ be a closed minimal surface immersed in $\mathbb{S}^N(1)$ with positive Gaussian curvature. Then
\begin{equation*}
\int_MS(3S-4)(3S-5)=2\int_M[\mathcal{B}_2-\frac{1}{4}S(3S-4)^2+\frac{1}{2}|\nnn S|^2]\ge0.
\end{equation*}
In particular, if $\frac{4}{3}\leq S\leq\frac{5}{3},$ then $S=\frac{4}{3}$ or $S=\frac{5}{3}.$
\end{thm}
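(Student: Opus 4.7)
The plan is to upgrade the first Simons-type identity of Theorem \ref{thm gap s=1} to the next order by taking the Laplacian of $\mathcal{B}_1$. Rewriting $\mathcal{B}_1=8|a_1|^2$ via Lemma \ref{FirD}, the Bochner formula gives $\tfrac{1}{2}\tri\mathcal{B}_1 = 8|\nabla a_1|^2 + 8\l a_1,\tri a_1\r$. The first term collapses cleanly: Proposition \ref{SeconD} cancels the $(3S-4)(S_{11}-S_{22})$ cross terms to give $|\nabla a_1|^2 = |a_{11}|^2 + |a_{12}|^2 = \tfrac{1}{8}\mathcal{B}_2$. For the second, I would plug the dual Laplacian formula \eqref{Dual1} into $\l a_1,\tri a_1\r$ and use $|a_1|^2 = \tfrac{1}{8}\mathcal{B}_1$ together with $\l a,a_1\r = \tfrac{1}{8}S_1$ and $\l b,a_1\r = -\tfrac{1}{8}S_2$ from \eqref{Funda} and \eqref{deriv01}, obtaining $\l a_1,\tri a_1\r = \tfrac{14-9S}{16}\mathcal{B}_1 - \tfrac{7}{32}|\nabla S|^2$.

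Integrating $\int_M\tri\mathcal{B}_1=0$ over the closed surface then produces
$$2\int_M\mathcal{B}_2 = \int_M(9S-14)\mathcal{B}_1 + \tfrac{7}{2}\int_M|\nabla S|^2.$$
I would next eliminate $\mathcal{B}_1$ on the right by substituting $\mathcal{B}_1=\tfrac{1}{2}\tri S+\tfrac{1}{2}S(3S-4)$ from \eqref{TWINST} and integrating by parts via $\int_M(9S-14)\tri S = -9\int_M|\nabla S|^2$. The elementary identity $\tfrac{1}{4}S(9S-14)(3S-4) - \tfrac{1}{4}S(3S-4)^2 = \tfrac{1}{2}S(3S-4)(3S-5)$ then rearranges the result into the asserted equation
$$\int_M S(3S-4)(3S-5) = 2\int_M\Bigl[\mathcal{B}_2-\tfrac{1}{4}S(3S-4)^2+\tfrac{1}{2}|\nabla S|^2\Bigr].$$

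The non-negativity is a pointwise Cauchy--Schwarz argument. By \eqref{Twins1} the Laplacian $\tri a = a_{11}+a_{22} = \tfrac{1}{2}a(4-3S)$ has squared norm $|\tri a|^2 = \tfrac{1}{16}S(3S-4)^2$; combining $|a_{11}+a_{22}|^2 \le 2(|a_{11}|^2+|a_{22}|^2)$ with $|a_{11}|^2+|a_{22}|^2 = \tfrac{1}{8}\mathcal{B}_2$ (Proposition \ref{SeconD} once more) gives the pointwise estimate $\tfrac{1}{4}S(3S-4)^2 \le \mathcal{B}_2$, so the integrand on the right is non-negative. For the pinching, $\tfrac{4}{3}\le S\le\tfrac{5}{3}$ automatically makes the Gaussian curvature $K=1-S/2\in[\tfrac{1}{6},\tfrac{1}{3}]$ positive, so the identity applies; since $S(3S-4)(3S-5)\le 0$ pointwise in this range, the integral inequality forces $S(3S-4)(3S-5)\equiv 0$, leaving $S\in\{\tfrac{4}{3},\tfrac{5}{3}\}$ at every point, hence $S$ is constant on the connected surface. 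The main obstacle is the first paragraph---identifying the auxiliary variable $|a_1|^2$ so that Proposition \ref{SeconD} and \eqref{Dual1} conspire to produce a closed-form expression for $\tfrac{1}{2}\tri\mathcal{B}_1$ in terms of $\mathcal{B}_2$, $\mathcal{B}_1$, and $|\nabla S|^2$; everything after that is careful bookkeeping.
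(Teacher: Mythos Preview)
Your argument is correct, and it reaches the same pointwise formula for $\tfrac{1}{2}\tri\mathcal{B}_1$ and the same integral identity as the paper. The route, however, is different. The paper expands $h_{ijk}^\alpha\tri h_{ijk}^\alpha$ directly from the commutation formula \eqref{Lap2}, computing each curvature contraction \eqref{sumlapS}--\eqref{Last} by hand and then packaging the result via the sum-of-squares decomposition \eqref{B2B}, namely $\mathcal{B}_2-\tfrac{1}{4}S(3S-4)^2=\mathcal{C}_1=2\sum\bigl[(a_{11}^\alpha-a_{22}^\alpha)^2+(a_{12}^\alpha+a_{21}^\alpha)^2\bigr]\ge 0$. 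You instead invoke the structural identities already established in Section~2: Lemma~\ref{FirD} to write $\mathcal{B}_1=8|a_1|^2$, formula \eqref{Dual1} with \eqref{deriv01} to evaluate $\l a_1,\tri a_1\r$ in one line, and Proposition~\ref{SeconD} to collapse $|a_{11}|^2+|a_{12}|^2$ (and later $|a_{11}|^2+|a_{22}|^2$) to $\tfrac18\mathcal{B}_2$; the non-negativity then follows from $|a_{11}+a_{22}|^2\le 2(|a_{11}|^2+|a_{22}|^2)$ rather than from an explicit $\mathcal{C}_1$. Your path is shorter and more conceptual, but it imports the holomorphic-differential argument behind Proposition~\ref{SeconD}; the paper's computation is heavier yet self-contained, needing nothing beyond the Ricci identities and \eqref{Twins1}. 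Both yield equivalent pointwise information, since your Cauchy--Schwarz step is exactly the inequality $|a_{11}-a_{22}|^2\ge 0$ contained in $\mathcal{C}_1\ge 0$.
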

\begin{proof}
By \eqref{Lap2}, We have
\begin{equation}\label{Sum1}
\begin{aligned}
h_{ijk}^\alpha\tri h_{ijk}^\alpha&=(h_{ijk}^\alpha\tri h_{ij}^\alpha)_k-\sum(\tri h_{ij}^\alpha)^2+2h_{ijk}^\alpha h_{pj}^\alpha R_{pikmm}+h_{ijk}^\alpha h_{ijp}^\alpha R_{pmkm}\\
&\hspace{1.3em}+4h_{ijk}^\alpha h_{pjm}^\alpha R_{pikm}+2h_{ijk}^\alpha h_{ijm}^\beta R_{\beta\alpha km}+h_{ijk}^\alpha h_{ij}^\beta R_{\beta\alpha kmm}.
\end{aligned}
\end{equation} 
By \eqref{Twins1}, we get
\begin{equation}\label{sumlapS}
\sum_\alpha(\tri h_{ij}^\alpha)^2=2\sum_\alpha[(\tri a^\alpha)^2+(\tri b^\alpha)^2]=\frac{1}{4}S(3S-4)^2.
\end{equation} 
By \eqref{Curv1}, it follows that
\begin{equation}\label{sumpinch2term3}
\begin{aligned}
2h_{ijk}^\alpha h_{pj}^\alpha R_{pikmm}&=-h_{ijk}^\alpha h_{pj}^\alpha(\delta_{pk}\delta_{im}-\delta_{pm}\delta_{ik})S_m\\
&=-h_{ijm}^\alpha h_{ij}^\alpha S_m\\
&=-\frac{1}{2}|\nabla S|^2,
\end{aligned}
\end{equation} 
and
\begin{equation}\label{sumpinch2term45}
4h_{ijk}^\alpha h_{pjm}^\alpha R_{pikm}+h_{ijk}^\alpha h_{ijp}^\alpha R_{pmkm}=5(1-\frac{S}{2})\mathcal{B}_1.
\end{equation} 
By \eqref{Curv2}, we obtain
\begin{equation}\label{sumpinch2term6}
2h_{ijk}^\alpha h_{ijm}^\beta R_{\beta\alpha km}=16(a_1^\alpha a_2^\beta-a_2^\alpha a_1^\beta)(a^\beta b^\alpha-a^\alpha b^\beta)=-\frac{1}{2}|\nabla S|^2.
\end{equation} 
By \eqref{Curv3}, we get
\begin{equation}\label{Last}
\begin{aligned}
h_{ijk}^\alpha h_{ij}^\beta R_{\beta\alpha kmm}&=h_{ij1}^\alpha h_{ij}^\beta R_{\beta\alpha 122}-h_{ij2}^\alpha h_{ij}^\beta R_{\beta\alpha 121}\\
&=4(a_1^\alpha a^\beta+a_2^\alpha b^\beta)(a_2^\beta b^\alpha-a^\beta a_1^\alpha-a_2^\alpha b^\delta+a^\alpha a_1^\beta)\\
&\hspace{1.3em}-4(a_2^\alpha a^\beta-a_1^\alpha b^\beta)(a_1^\beta b^\alpha+a^\beta a_2^\alpha-a_1^\alpha b^\beta-a^\alpha a_2^\beta)\\
&=-\frac{1}{2}S\mathcal{B}_1.
\end{aligned}
\end{equation} 
Combining \eqref{Sum1}, \eqref{sumlapS}, \eqref{sumpinch2term3}, \eqref{sumpinch2term45}, \eqref{sumpinch2term6} and \eqref{Last}, we obtain
\begin{equation}\label{SecSum}
\begin{aligned}
h_{ijk}^\alpha\tri h_{ijk}^\alpha=(h_{ijk}^\alpha\tri h_{ij}^\alpha)_k+\frac{5}{2}\tri S-\frac{3}{4}\tri S^2+\frac{1}{2}|\nabla S|^2-\frac{1}{4}S(3S-4)(9S-14).
\end{aligned}
\end{equation} 
Also, with \eqref{sumlapS} we can give a representation of $\mathcal{B}_2$ as
\begin{equation}\label{B2B}
\begin{aligned}
\mathcal{B}_2&=4\sum\left[(a_{11}^\alpha)^2+(a_{22}^\alpha)^2+(a_{12}^\alpha)^2+(a_{21}^\alpha)^2\right]\\
&=2\sum\left[(a_{11}^\alpha+a_{22}^\alpha)^2+(a_{12}^\alpha-a_{21}^\alpha)^2\right]+2\sum\left[(a_{11}^\alpha-a_{22}^\alpha)^2+(a_{12}^\alpha+a_{21}^\alpha)^2\right]\\
&=2\sum\left[(\tri a^\alpha)^2+(\tri b^\alpha)^2\right]+2\sum\left[(a_{11}^\alpha-a_{22}^\alpha)^2+(a_{12}^\alpha+a_{21}^\alpha)^2\right]\\
&=\frac{1}{4}S(3S-4)^2+\mathcal{C}_1,
\end{aligned}
\end{equation}
where
\begin{equation*}
\mathcal{C}_1=2\sum\left[(a_{11}^\alpha-a_{22}^\alpha)^2+(a_{12}^\alpha+a_{21}^\alpha)^2\right]=\mathcal{B}_2-\frac{1}{4}S(3S-4)^2\ge0.
\end{equation*}
By \eqref{SecSum} and \eqref{B2B}, we conclude that
\begin{equation}\label{LapFir}
\begin{aligned}
\frac{1}{2}\tri\mathcal{B}_1&=h_{ijk}^\alpha\tri h_{ijk}^\alpha+\mathcal{B}_2\\
&=(h_{ijk}^\alpha\tri h_{ij}^\alpha)_k+\frac{5}{2}\tri S-\frac{3}{4}\tri S^2+\frac{1}{2}|\nnn S|^2-\frac{1}{2}S(3S-4)(3S-5)+\mathcal{C}_1.
\end{aligned}
\end{equation} 
Integrating on both sides of \eqref{LapFir}, we get
\begin{equation}\label{FirInts=2}
\begin{aligned}
\frac{1}{2}\int_MS(3S-4)(3S-5)&=\int_M(\mathcal{C}_1+\frac{1}{2}|\nnn S|^2)\\
&=\int_M[\mathcal{B}_2-\frac{1}{4}S(3S-4)^2+\frac{1}{2}|\nnn S|^2]\\
&\ge0.
\end{aligned}
\end{equation} 
If $\frac{4}{3}\le S\le\frac{5}{3}$, then $S(3S-4)(3S-5)\le0$. Combining \eqref{FirInts=2}, we arrive at $S(3S-4)(3S-5)=0$. It follows that $S=\frac{4}{3}$ or $S=\frac{5}{3}$.
\end{proof}

\begin{rem}
By \eqref{Diff}, \eqref{equ}, \eqref{Funda} and \eqref{SecSum}, we conclude that
\begin{equation}\label{SecEq}
\frac{1}{2}\tri\mathcal{B}_1=\frac{7}{2}\tri S-\frac{9}{8}\tri S^2+\frac{1}{2}|\nnn S|^2-\frac{1}{4}S(3S-4)(9S-14)+\mathcal{B}_2.
\end{equation}
\end{rem}

\section{The case $s=3$}
We now consider the case $s=3$.
\subsection{The lower bound of $\mathcal{B}_3$}
\begin{lem}\label{lem a lower bound to B3}
Suppose that $M$ is a closed surface minimally immersed in a unit sphere $\mathbb{S}^N(1)$ with positive Gaussian curvature. Under the foregoing assumptions and notations, we have
\begin{equation*}
\mathcal{B}_3=\sum(h_{ijklm}^\alpha)^2=\frac{1}{4}(45S^2-144S+116)\mathcal{B}_1+\frac{13}{8}(7S-8)|\nabla S|^2+\mathcal{C}_2+\mathcal{C}_3,
\end{equation*}
where $\mathcal{C}_2=2\sum\left[(a_{111}^\alpha-a_{122}^\alpha)^2+(a_{211}^\alpha-a_{222}^\alpha)^2\right]$ and $\mathcal{C}_3=2\sum\left[(a_{112}^\alpha+a_{121}^\alpha)^2+(a_{212}^\alpha+a_{221}^\alpha)^2\right]$.
\end{lem}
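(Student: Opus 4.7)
The plan follows the pattern of \eqref{B2B}: decompose $\mathcal{B}_3$ into squares of the essentially independent fifth-order components $a_{ilm}^\alpha := h_{11ilm}^\alpha$ (for $i,l,m\in\{1,2\}$), then pair these into four pairs whose symmetric and antisymmetric parts yield respectively Laplacians (or the target $\mathcal{C}_3$) and the target $\mathcal{C}_2$ (or Ricci commutators). Concretely, using minimality ($h_{22}^\alpha = -h_{11}^\alpha$) and Codazzi's symmetry of $h_{ijk}^\alpha$, which propagates to symmetry of $h_{ijklm}^\alpha$ in its first three indices, a component count gives $\mathcal{B}_3 = 4\sum_{\alpha,i,l,m}(a_{ilm}^\alpha)^2$. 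Grouping the eight triples into the pairs $\{a_{111},a_{122}\}$, $\{a_{211},a_{222}\}$, $\{a_{112},a_{121}\}$, $\{a_{212},a_{221}\}$, applying $2(x^2+y^2)=(x+y)^2+(x-y)^2$ on each, and noting that $a_{111}^\alpha + a_{122}^\alpha = \tri a_1^\alpha$ and $a_{211}^\alpha + a_{222}^\alpha = \tri a_2^\alpha$, produces
\begin{equation*}
\mathcal{B}_3 = 2\sum_\alpha\bigl[(\tri a_1^\alpha)^2 + (\tri a_2^\alpha)^2\bigr] + \mathcal{C}_2 + \mathcal{C}_3 + 2\sum_\alpha\bigl[(a_{112}^\alpha - a_{121}^\alpha)^2 + (a_{212}^\alpha - a_{221}^\alpha)^2\bigr].
\end{equation*}

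For the Laplacian squares I would substitute the formulas \eqref{Dual1} and \eqref{Dual2} for $\tri a_i^\alpha$ and then evaluate each resulting inner product by the pointwise identities already at hand: $|a|^2 = |b|^2 = S/4$ and $\l a,b\r = 0$ from \eqref{Funda}, $|a_1|^2 = |a_2|^2 = \mathcal{B}_1/8$ from Lemma \ref{FirD}, and the formulas for $\l a_i,a\r, \l a_i,b\r$ in \eqref{deriv01}. For the commutator differences I would apply Ricci's formula \eqref{Ric2} with index choices $(i,j,k,l,m) = (1,1,1,1,2)$ and $(1,1,2,1,2)$; using \eqref{Curv1} (which reduces the 2D intrinsic curvature to a scalar multiple of $2-S$) together with \eqref{Curv2}--\eqref{Curv3} and \eqref{deriv01} for the normal-curvature contractions, each commutator collapses to a closed-form expression of the shape $a_{112}^\alpha - a_{121}^\alpha = \tfrac{3}{2}(S-2)a_2^\alpha + \tfrac{1}{4}(b^\alpha S_1 + a^\alpha S_2)$. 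Squaring these and summing, again via the same inner-product toolkit, gives an expression with the same polynomial-in-$S$ structure in $\mathcal{B}_1$ and $|\nabla S|^2$ as the Laplacian piece.

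The main obstacle is the final bookkeeping: the two squared Laplacians and the two squared commutators each decompose into three monomials upon expansion, and the various signs arising from $R_{p112}$, $R_{\alpha\beta 12}$, and the evaluations of $\l a_i,a\r, \l a_i,b\r$ must all be tracked carefully in order to verify that the $\mathcal{B}_1$-coefficient collects to exactly $\frac{1}{4}(45S^2-144S+116)$ and the $|\nabla S|^2$-coefficient to exactly $\frac{13}{8}(7S-8)$. Conceptually the novelty is in the Laplacian--commutator decomposition in paragraph one; once that is in hand, the rest is a controlled but lengthy algebraic verification.
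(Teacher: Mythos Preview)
Your proposal is correct and follows essentially the same approach as the paper: the same decomposition of $\mathcal{B}_3$ into the eight $a_{ilm}^\alpha$ components, the same pairing via $2(x^2+y^2)=(x+y)^2+(x-y)^2$ (yielding $2\sum[(\tri a_1)^2+(\tri a_2)^2]+\mathcal{C}_2$ from one half and $\mathcal{C}_3$ plus Ricci commutators from the other), and the same evaluation tools \eqref{Dual1}--\eqref{Dual2}, \eqref{Ric2}, \eqref{Curv1}--\eqref{Curv2}, \eqref{Funda}, Lemma~\ref{FirD}, and \eqref{deriv01}. Your explicit commutator formula $a_{112}^\alpha-a_{121}^\alpha=\tfrac{3}{2}(S-2)a_2^\alpha+\tfrac{1}{4}(b^\alpha S_1+a^\alpha S_2)$ is exactly what the paper obtains, and the remaining algebra indeed collects to the stated coefficients (note that \eqref{Curv3} is not actually needed here, only \eqref{Curv2}).
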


\begin{proof}
By \eqref{Dual1}, \eqref{Dual2} and Corollary \ref{rem3}, we have
\begin{equation*}\label{sumb1}
\begin{aligned}
&\hspace{1.1em}\sum\left[(\tri a_1^\alpha)^2+(\tri a_2^\alpha)^2\right]\\
&=\sum\left[\left(\frac{1}{2}a_1^\alpha(14-9S)+\frac{7}{4}(-a^\alpha S_1+b^\alpha S_2)\right)^2\right.\left.+\left(\frac{1}{2}a_1^\alpha(14-9S)+\frac{7}{4}(-a^\alpha S_1+b^\alpha S_2)\right)^2\right]\\
&=\frac{1}{16}(9S-14)^2\mathcal{B}_1+\frac{7}{32}(25S-28)|\nnn S|^2.
\end{aligned}
\end{equation*}
Then we obtain
\begin{equation}\label{Ineq1}
\begin{aligned}
&\hspace{1.3em}4\sum\left[(a_{111}^\alpha)^2+(a_{122}^\alpha)^2+(a_{211}^\alpha)^2+(a_{222}^\alpha)^2\right]\\
&=2\sum\left[(a_{111}^\alpha+a_{122}^\alpha)^2+(a_{211}^\alpha+a_{222}^\alpha)^2\right]+2\sum\left[(a_{111}^\alpha-a_{122}^\alpha)^2+(a_{211}^\alpha-a_{222}^\alpha)^2\right]\\
&=2\sum\left[(\tri a_1^\alpha)^2+(\tri a_2^\alpha)^2\right]+\mathcal{C}_2\\
&=\frac{1}{8}(9S-14)^2\mathcal{B}_1+\frac{7}{16}(25S-28)|\nabla S|^2+\mathcal{C}_2.
\end{aligned}
\end{equation}
By \eqref{Ric2} and \eqref{Curv2}, we have
\begin{equation*}\label{Ineq2}
\begin{aligned}
&\hspace{1.3em}4\sum\left[(a_{112}^\alpha)^2+(a_{121}^\alpha)^2+(a_{212}^\alpha)^2+(a_{221}^\alpha)^2\right]\\
&=2\sum\left[(a_{112}^\alpha-a_{121}^\alpha)^2+(a_{212}^\alpha-a_{221}^\alpha)^2\right]+2\sum\left[(a_{112}^\alpha+a_{121}^\alpha)^2+(a_{212}^\alpha+a_{221}^\alpha)^2\right]\\
&=2\left[(3h_{p11}^\alpha R_{p112}+h_{111}^\beta R_{\beta\alpha12})^2+(3h_{p12}^\alpha R_{p112}+h_{112}^\beta R_{\beta\alpha 12})^2\right]+\mathcal{C}_3\\
&=2\left[(3h_{211}^\alpha R_{2112}+h_{111}^\beta R_{\beta\alpha12})^2+(3h_{212}^\alpha R_{2112}+h_{112}^\beta R_{\beta\alpha 12})^2\right]+\mathcal{C}_3\\
&=2\left[(3a_2^\alpha(\frac{S}{2}-1)+a_1^\beta R_{\beta\alpha12})^2+(3a_1^\alpha(1-\frac{S}{2})+a_2^\beta R_{\beta\alpha 12})^2\right]+\mathcal{C}_3\\
&=18(1-\frac{S}{2})^2(|a_1|^2+|a_2|^2)+2(a_1^\beta R_{\beta\alpha 12})^2+2(a_2^\beta R_{\beta\alpha 12})^2+24a_1^\alpha a_2^\beta(1-\frac{S}{2})R_{\beta\alpha 12}+\mathcal{C}_3\\
&=18(1-\frac{S}{2})^2\frac{1}{4}\mathcal{B}_1+\frac{1}{32}S|\nnn S|^2+\frac{1}{32}S|\nnn S|^2-\frac{3}{4}(1-\frac{S}{2})|\nnn S|^2+\mathcal{C}_3\\
&=\frac{9}{2}(1-\frac{S}{2})^2\mathcal{B}_1+\frac{1}{16}(7S-12)|\nabla S|^2+\mathcal{C}_3.
\end{aligned}
\end{equation*}
Combining these two parts gives that
\begin{equation*}\label{LowB}
\begin{aligned}
\mathcal{B}_3
&=\sum(h_{ijklm}^\alpha)^2\\
&=4\sum\left[(a_{111}^\alpha)^2+(a_{122}^\alpha)^2+(a_{211}^\alpha)^2+(a_{222}^\alpha)^2+(a_{112}^\alpha)^2+(a_{121}^\alpha)^2+(a_{212}^\alpha)^2+(a_{221}^\alpha)^2\right]\\
&=\frac{1}{8}\mathcal{B}_1(9S-14)^2+\frac{7}{16}|\nabla S|^2(25S-28)+\frac{9}{2}\mathcal{B}_1(1-\frac{S}{2})^2+\frac{1}{16}(7S-12)|\nabla S|^2+\mathcal{C}_2+\mathcal{C}_3\\
&=\frac{1}{4}(45S^2-144S+116)\mathcal{B}_1+\frac{13}{8}(7S-8)|\nabla S|^2+\mathcal{C}_2+\mathcal{C}_3,
\end{aligned}
\end{equation*}
which completes the proof.
\end{proof}

\subsection{Laplacian of $\mathcal{B}_2$}
\begin{lem}
Let $M$ be a closed minimal surface immersed in $\mathbb{S}^N(1)$ with positive Gaussian curvature. Then we have
\begin{align}
h_{ijkl}^\alpha h_{ijlk}^\alpha&=\mathcal{B}_2-\frac{1}{4}S(3S-4)^2,\label{Swap} \\
h_{ijkk}^\alpha h_{ijll}^\alpha&=\frac{1}{4}S(3S-4)^2.\label{Double}
\end{align}
\end{lem}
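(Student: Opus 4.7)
The plan is to treat both identities as pointwise algebraic consequences of the previously established relations \eqref{Twins1}, \eqref{Funda}, \eqref{Ric1}, \eqref{Curv1}, and \eqref{Curv2}; no further integration is needed. I will handle the easier identity \eqref{Double} first and then reduce \eqref{Swap} to a sum of squares controlled by the Ricci identity.

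For \eqref{Double}, the first step is to contract: since $h_{ijkk}^\alpha = \triangle h_{ij}^\alpha$, the double sum separates as
$$ h_{ijkk}^\alpha h_{ijll}^\alpha = \sum_{i,j,\alpha}(\triangle h_{ij}^\alpha)^2 = 2\sum_\alpha(\triangle a^\alpha)^2 + 2\sum_\alpha(\triangle b^\alpha)^2, $$
where the factor $2$ comes from the identifications $h_{11}^\alpha = -h_{22}^\alpha = a^\alpha$ and $h_{12}^\alpha = h_{21}^\alpha = b^\alpha$. Substituting $\triangle a^\alpha = \frac{1}{2}(4-3S)a^\alpha$ and $\triangle b^\alpha = \frac{1}{2}(4-3S)b^\alpha$ from \eqref{Twins1} gives $\frac{1}{2}(3S-4)^2(|a|^2+|b|^2)$, and then \eqref{Funda} yields $\frac{1}{4}S(3S-4)^2$ immediately.

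The proof of \eqref{Swap} begins with an algebraic observation special to $\dim M = 2$: the diagonal $(k=l)$ terms in $\mathcal{B}_2 - h_{ijkl}^\alpha h_{ijlk}^\alpha$ cancel, and a short rearrangement of the remaining $(k,l)=(1,2),(2,1)$ terms gives
$$\mathcal{B}_2 - h_{ijkl}^\alpha h_{ijlk}^\alpha = \sum_{i,j,\alpha}\bigl(h_{ij12}^\alpha - h_{ij21}^\alpha\bigr)^2.$$
The Ricci formula \eqref{Ric1} rewrites the difference as a curvature contraction,
$$h_{ij12}^\alpha - h_{ij21}^\alpha = h_{pj}^\alpha R_{pi12} + h_{ip}^\alpha R_{pj12} + h_{ij}^\beta R_{\beta\alpha 12},$$
which I would evaluate case-by-case for $(i,j)\in\{(1,1),(1,2),(2,2)\}$, with the $(2,1)$ case redundant by the symmetry in $(i,j)$. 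Using \eqref{Curv1}, \eqref{Curv2} and the relations $\l a,b\r = 0$, $|a|^2 = |b|^2 = S/4$ from \eqref{Funda}, each case reduces to a scalar multiple of $a^\alpha$ or $b^\alpha$. Concretely, I expect
$$h_{1112}^\alpha - h_{1121}^\alpha = \tfrac{1}{2}(3S-4)b^\alpha,\quad h_{1212}^\alpha - h_{1221}^\alpha = -\tfrac{1}{2}(3S-4)a^\alpha,\quad h_{2212}^\alpha - h_{2221}^\alpha = -\tfrac{1}{2}(3S-4)b^\alpha,$$
and indeed the $(1,1)$ computation is already essentially carried out inside the proof of Proposition \ref{SeconD}. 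Squaring and summing collects four copies of $\tfrac{1}{16}S(3S-4)^2$, giving $\tfrac{1}{4}S(3S-4)^2$ as claimed. The main obstacle is just the bookkeeping of the curvature contractions, which is routine but tedious.
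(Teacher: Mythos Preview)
Your proposal is correct and follows essentially the same approach as the paper: both arguments derive \eqref{Double} directly from \eqref{Twins1} and \eqref{Funda} (the paper phrases this as ``by \eqref{sumlapS} directly''), and both derive \eqref{Swap} by applying the Ricci identity \eqref{Ric1} together with \eqref{Curv1}, \eqref{Curv2}, and \eqref{Funda}. The only organizational difference is that you first rewrite $\mathcal{B}_2 - h_{ijkl}^\alpha h_{ijlk}^\alpha$ as the sum of squares $\sum_{i,j,\alpha}(h_{ij12}^\alpha - h_{ij21}^\alpha)^2$ and then evaluate each $(i,j)$ case explicitly, whereas the paper contracts $h_{ijkl}^\alpha$ against the Ricci identity and substitutes once more to eliminate the fourth-order terms; both routes reduce to the same curvature products and yield $\frac{1}{4}S(3S-4)^2$.
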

\begin{proof}
The second identity can be obtained by \eqref{sumlapS} directly. For the first identity, by using \eqref{Ric1} we obtain
\begin{align*}
h_{ijkl}^\alpha h_{ijlk}^\alpha &=h_{ijkl}^\alpha h_{ijkl}^\alpha+h_{ijkl}^\alpha(2h_{ip}^\alpha R_{pjlk}+h_{ij}^\beta R_{\beta\alpha lk})\\
&=\mathcal{B}_2+2(h_{ij12}^\alpha-h_{ij21}^\alpha)h_{ip}^\alpha R_{pj21}+h_{ij}^\beta(h_{ij21}^\alpha-h_{ij12}^\alpha)R_{\beta\alpha 12}\\
&=\mathcal{B}_2+2(2h_{iq}^\alpha R_{qj12}+h_{ij}^\beta R_{\beta\alpha 12})h_{ip}^\alpha R_{pj21}+h_{ij}^\beta(2h_{ip}^\alpha R_{pj21}+h_{ij}^\gamma R_{\gamma\alpha 21})R_{\gamma\alpha12}\\
&=\mathcal{B}_2+S(1-\frac{S}{2})(3S-4)-\frac{1}{4}S^2(3S-4)\\
&=\mathcal{B}_2-\frac{1}{4}S(3S-4)^2,
\end{align*}
which completes the proof.
\end{proof}
We now derive the Laplacian of $\mathcal{B}_2$.
\begin{thm}\label{thm laplacian B2}
Let $M$ be a closed minimal surface immersed in $\mathbb{S}^N(1)$ with positive Gaussian curvature. Then
\begin{equation*}
\begin{aligned}
\frac{1}{2}\tri\mathcal{B}_2&=(h^\alpha_{ijkl}\tri h^\alpha_{ijk})_l-(21S^2-64S+49)\mathcal{B}_1+7(1-\frac{S}{2})\mathcal{B}_2+\frac{1}{4}S(3S-4)^2(7S-12)\\
&\hspace{1.4em}-\frac{7}{2}(7S-8)|\nnn S|^2-\l\nnn\mathcal{B}_1,\nnn S\r+\frac{1}{4}(\tri S)^2-\frac{1}{2}|\text{\rm Hess}~S|^2+\mathcal{B}_3.
\end{aligned}
\end{equation*}
\end{thm}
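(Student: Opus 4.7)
The plan is to mirror the argument of Theorem \ref{thm gap s=2} one covariant-derivative level higher. From the Bochner identity
$$\frac{1}{2}\tri\mathcal{B}_2 = \mathcal{B}_3 + h_{ijkl}^\alpha\tri h_{ijkl}^\alpha,$$
the task reduces to evaluating $h_{ijkl}^\alpha\tri h_{ijkl}^\alpha$ pointwise. I would commute $\tri=\nnn_m\nnn_m$ past the last covariant derivative $\nnn_l$: applying \eqref{Ric2} once (to move the first $m$ across $l$, which produces a $\nnn_m$-derivative of a curvature correction on $h_{ijk}^\alpha$) and \eqref{Ric3} once (to move the second $m$ across $l$, which produces a curvature correction on $h_{ijkm}^\alpha$) gives an expansion
$$\tri h_{ijkl}^\alpha = (\tri h_{ijk}^\alpha)_l + \mathcal{R}_l,$$
where $\mathcal{R}_l$ is an explicit sum of contractions of $h,\nnn h,\nnn^2 h$ against the curvature tensors $R_{ijkl}$, $R_{\alpha\beta kl}$ and their derivatives.

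Using $h_{ijkll}^\alpha=\tri h_{ijk}^\alpha$ and the Leibniz rule, one rewrites
$$h_{ijkl}^\alpha\tri h_{ijkl}^\alpha = (h_{ijkl}^\alpha\tri h_{ijk}^\alpha)_l - \sum(\tri h_{ijk}^\alpha)^2 + h_{ijkl}^\alpha\mathcal{R}_l,$$
which exhibits the divergence term appearing in the statement. The scalar $\sum(\tri h_{ijk}^\alpha)^2$ was already computed inside the proof of Lemma \ref{lem a lower bound to B3}: via \eqref{Dual1}, \eqref{Dual2} it equals $\tfrac{1}{4}(9S-14)^2\mathcal{B}_1+\tfrac{7}{8}(25S-28)|\nnn S|^2$. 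The remaining contraction $h_{ijkl}^\alpha\mathcal{R}_l$ must be simplified term by term using the explicit curvature formulas \eqref{Curv1}--\eqref{Curv3} and the auxiliary identities \eqref{Funda}, \eqref{Twins1}, Corollary \ref{rem3}, and Proposition \ref{SeconD}. The Gauss piece of Riemann will feed $\mathcal{B}_2$ and $|\nnn S|^2$ contributions (and, through Proposition \ref{SeconD}, $|\text{\rm Hess}~S|^2$ and $(\tri S)^2$); the normal-curvature piece \eqref{Curv2} will feed multiples of $\mathcal{B}_1$ and $|\nnn S|^2$; the derivative \eqref{Curv3}, combined with the extra $\nnn_m$ in front of the \eqref{Ric2}-correction, will generate the cross-term $\l\nnn\mathcal{B}_1,\nnn S\r$ and additional polynomial-in-$S$ multiples of $\mathcal{B}_1$.

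The principal obstacle is algebraic bookkeeping: of order a dozen distinct contractions have to be expanded and converted into the intrinsic invariants in the statement. Two conversions seem to deserve particular care. First, $|\text{\rm Hess}~S|^2$ is not directly present in $\mathcal{B}_2$, but the decomposition $|\text{\rm Hess}~S|^2 = \tfrac{1}{2}(\tri S)^2 + \tfrac{1}{2}(S_{11}-S_{22})^2 + 2S_{12}^2$ together with Proposition \ref{SeconD}, which controls $|a_{12}|^2$ and $|a_{21}|^2$ in terms of $\mathcal{B}_2$ and $(S_{11}-S_{22})$, is precisely what isolates it. Second, the derivative of the normal curvature \eqref{Curv3} mixes $a$- and $b$-gradients, and the inner-product formulas \eqref{deriv01}--\eqref{deriv12} of Corollary \ref{rem3} are what compress the mixed terms into $\l\nnn\mathcal{B}_1,\nnn S\r$ and $|\nnn S|^2$. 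Once all substitutions are performed, the coefficients of $\mathcal{B}_1$, $\mathcal{B}_2$, $|\nnn S|^2$, $\l\nnn\mathcal{B}_1,\nnn S\r$, $(\tri S)^2$, $|\text{\rm Hess}~S|^2$ and the pure $S$-polynomial match the stated identity by direct comparison.
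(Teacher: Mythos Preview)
Your outline is essentially the paper's own argument: Bochner plus the commutation $\tri h_{ijkl}^\alpha=(\tri h_{ijk}^\alpha)_l+\text{curvature terms}$, the divergence trick $h_{ijkl}^\alpha(\tri h_{ijk}^\alpha)_l=(h_{ijkl}^\alpha\tri h_{ijk}^\alpha)_l-\sum(\tri h_{ijk}^\alpha)^2$, and then term-by-term evaluation of the curvature contractions via \eqref{Curv1}--\eqref{Curv3} and Corollary~\ref{rem3}. One small correction to your bookkeeping: the $|\text{\rm Hess}~S|^2$ and $(\tri S)^2$ contributions do not come from the Gauss piece via Proposition~\ref{SeconD} but from the normal-curvature contraction $2h_{ijkl}^\alpha h_{ijkm}^\delta R_{\delta\alpha lm}$, which the paper reduces to $(S_{11}-\mathcal{B}_1)(S_{22}-\mathcal{B}_1)-S_{12}^2$ using only the inner-product identities \eqref{deriv02} of Corollary~\ref{rem3}; Proposition~\ref{SeconD} is not needed here.
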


\begin{proof}
By \eqref{Ric3}, \eqref{Lap1} and \eqref{Lap2}, we get
\begin{equation*}
\begin{aligned}
\tri h^\alpha_{ijkl}
&=h^\alpha_{ijklmm}\\
&=(h^\alpha_{ijkml}+h^\alpha_{pjk}R_{pilm}+h^\alpha_{ipk}R_{pjlm}+h^\alpha_{ijp}R_{pklm}+h^\beta_{ijk}R_{\beta\alpha lm})_m\\
&=h^\alpha_{ijkmlm}+(h^\alpha_{pjk}R_{pilm}+h^\alpha_{ipk}R_{pjlm}+h^\alpha_{ijp}R_{pklm}+h^\beta_{ijk}R_{\beta\alpha lm})_m\\
&=h^\alpha_{ijkmml}+h^\alpha_{pjkm}R_{pilm}+h^\alpha_{ipkm}R_{pjlm}+h^\alpha_{ijpm}R_{pklm}+h^\alpha_{ijkp}R_{pmlm}+h^\beta_{ijkm}R_{\beta\alpha lm}\\
&\hspace{1.3em}+(h^\alpha_{pjk}R_{pilm}+h^\alpha_{ipk}R_{pjlm}+h^\alpha_{ijp}R_{pklm}+h^\beta_{ijk}R_{\beta\alpha lm})_m\\
&=(\triangle h^\alpha_{ijk})_l+2h^\alpha_{pjkm}R_{pilm}+2h^\alpha_{ipkm}R_{pjlm}+2h^\alpha_{ijpm}R_{pklm}+h^\alpha_{ijkp}R_{pmlm}\\
&\hspace{1.3em}+h^\alpha_{pjk}R_{pilmm}+h^\alpha_{ipk}R_{pjlmm}+h^\alpha_{ijp}R_{pklmm}+h^\beta_{ijk}R_{\beta\alpha lmm}+2h^\beta_{ijkm}R_{\beta\alpha lm}.
\end{aligned}
\end{equation*} 
It follows that
\begin{equation}\label{FirEq}
\begin{aligned}
h^\alpha_{ijkl}\triangle h^\alpha_{ijkl}&=h^\alpha_{ijkl}(\triangle h^\alpha_{ijk})_l+6h^\alpha_{ijkl}h^\alpha_{pjkm}R_{pilm}+h^\alpha_{ijkl}h^\alpha_{ijkp}R_{pmlm}\\
&\hspace{1.3em}+3h^\alpha_{ijkl}h^\alpha_{pjk}R_{pilmm}+2h^\alpha_{ijkl}h^\delta_{ijkm}R_{\delta\alpha lm}+h^\alpha_{ijkl}h^\delta_{ijk}R_{\delta\alpha lmm}\\
&=(h^\alpha_{ijkl}\triangle h^\alpha_{ijk})_l-\sum(\triangle h^\alpha_{ijk})^2+6h^\alpha_{ijkl}h^\alpha_{pjkm}R_{pilm}+h^\alpha_{ijkl}h^\alpha_{ijkp}R_{pmlm}\\
&\hspace{1.3em}+3h^\alpha_{ijkl}h^\alpha_{pjk}R_{pilmm}+h^\alpha_{ijkl}h^\delta_{ijk}R_{\delta\alpha lmm}+2h^\alpha_{ijkl}h^\delta_{ijkm}R_{\delta\alpha lm}.
\end{aligned}
\end{equation} 
By \eqref{sumb1}, the second term is equal to
\begin{equation}\label{2ndterm}
\begin{aligned}
-\sum(\tri h_{ijk}^\alpha)^2&=-4\sum(\tri a_1^\alpha)^2+(\tri a_2^\alpha)^2\\
&=-\frac{1}{4}(9S-14)^2\mathcal{B}_1-\frac{7}{8}(25S-28)|\nabla S|^2.
\end{aligned}
\end{equation} 
By \eqref{Swap}, \eqref{Double} and the minimality, the third term and the fourth term are
\begin{equation}\label{3rd4thterm}
\begin{aligned}
6h^\alpha_{ijkl}h^\alpha_{pjkm}R_{pilm}+h^\alpha_{ijkl}h^\alpha_{ijkp}R_{pmlm}=7(1-\frac{S}{2})\mathcal{B}_2-3S(1-\frac{S}{2})(3S-4)^2.
\end{aligned}
\end{equation} 
By Lemma \ref{FirD}, the fifth term is equal to
\begin{equation}\label{5thterm}
\begin{aligned}
3h_{ijkl}^\alpha h_{pjk}^\alpha R_{pilmm}&=3h_{ijkl}^\alpha h_{ljk}^\alpha R_{lilii}+3h_{ijki}^\alpha h_{mjk}^\alpha R_{miimm}\\
&=-12(h_{111}^\alpha h_{1121}^\alpha S_2+h_{112}^\alpha h_{1112}^\alpha S_1)\\
&=-\frac{3}{4}((\mathcal{B}_1)_2+S_2(3S-4))S_2-\frac{3}{4}((\mathcal{B}_1)_1+S_1(3S-4))S_1\\
&=-\frac{3}{4}\l\nabla\mathcal{B}_1,\nabla S\r-\frac{3}{4}(3S-4)|\nabla S|^2.
\end{aligned}
\end{equation} 
By Lemma \ref{FirD} and \eqref{Ric1}, the sixth term is
\begin{equation}\label{Final}
\begin{aligned}
h_{ijkl}^\alpha h_{ijk}^\delta R_{\delta\alpha lmm}&=h_{ijkl}^\alpha h_{ijk}^\delta R_{\delta\alpha122}+h_{ijk2}^\alpha h_{ijk}^\delta R_{\delta\alpha211}\\
&=2h_{ijk}^\delta(h_{ijk1}^\alpha(h_{12}^\alpha h_{112}^\delta-h_{111}^\alpha h_{11}^\delta-h_{112}^\alpha h_{12}^\delta+h_{11}^\alpha h_{111}^\delta)\\
&\hspace{1.3em}-h_{ijk2}^\alpha(h_{12}^\alpha h_{111}^\delta+h_{112}^\alpha h_{11}^\delta-h_{111}^\alpha h_{12}^\delta-h_{11}^\alpha h_{112}^\delta))\\
&=-\frac{1}{4}\l\nabla\mathcal{B}_1,\nabla S\r-\frac{1}{8}(3S-4)|\nabla S|^2-\frac{1}{4}S(3S-4)\mathcal{B}_1.
\end{aligned}
\end{equation} 
The final term is
\begin{equation}\label{Finall}
\begin{aligned}
2h^\alpha_{ijkl}h^\delta_{ijkm}R_{\delta\alpha lm}&=64(\l a,a_{11}\r\l a,a_{22}\r+\l b,a_{11}\r\l b,a_{22}\r)\\
&=(S_{11}-\mathcal{B}_1)(S_{22}-\mathcal{B}_1)-(S_{12})^2\\
&=(\mathcal{B}_1)^2-\frac{1}{2}S(3S-4)\triangle S-\frac{1}{2}|\text{\rm Hess}~S|^2\\
&=\frac{1}{4}S^2(3S-4)^2+\frac{1}{4}(\triangle S)^2-\frac{1}{2}|\text{\rm Hess}~S|^2.
\end{aligned}
\end{equation} 
Combining \eqref{FirEq}, \eqref{2ndterm}, \eqref{3rd4thterm}, \eqref{5thterm}, \eqref{Final} and \eqref{Finall} gives that
\begin{equation*}
\begin{aligned}
h^\alpha_{ijkl}\tri h^\alpha_{ijkl}
&=(h^\alpha_{ijkl}\tri h^\alpha_{ijk})_l-\frac{1}{4}(9S-14)^2\mathcal{B}_1-\frac{7}{8}(25S-28)|\nnn S|^2+7(1-\frac{S}{2})\mathcal{B}_2\\
&\hspace{1.3em}-3S(1-\frac{S}{2})(3S-4)^2-\frac{3}{4}\l\nnn\mathcal{B}_1,\nnn S\r-\frac{3}{4}(3S-4)|\nnn S|^2-\frac{1}{4}\l\nnn\mathcal{B}_1,\nnn S\r\\
&\hspace{1.3em}-\frac{1}{8}(3S-4)|\nnn S|^2-\frac{1}{4}S(3S-4)\mathcal{B}_1+\frac{1}{4}S^2(3S-4)^2+\frac{1}{4}(\tri S)^2-\frac{1}{2}|\text{Hess}~S|^2\\
&=(h^\alpha_{ijkl}\tri h^\alpha_{ijk})_l-(21S^2-64S+49)\mathcal{B}_1+7(1-\frac{S}{2})\mathcal{B}_2+\frac{1}{4}S(3S-4)^2(7S-12)\\
&\hspace{1.3em}-\frac{7}{2}(7S-8)|\nnn S|^2-\l\nnn\mathcal{B}_1,\nnn S\r+\frac{1}{4}(\tri S)^2-\frac{1}{2}|\text{Hess}~S|^2.
\end{aligned}
\end{equation*}
Then, we obtain
\begin{equation*}
\begin{aligned}
\frac{1}{2}\tri\mathcal{B}_2&=h^\alpha_{ijkl}\tri h^\alpha_{ijkl}+\mathcal{B}_3\\
&=(h^\alpha_{ijkl}\tri h^\alpha_{ijk})_l-(21S^2-64S+49)\mathcal{B}_1+7(1-\frac{S}{2})\mathcal{B}_2+\frac{1}{4}S(3S-4)^2(7S-12)\\
&\hspace{1.3em}-\frac{7}{2}(7S-8)|\nnn S|^2-\l\nnn\mathcal{B}_1,\nnn S\r+\frac{1}{4}(\tri S)^2-\frac{1}{2}|\text{Hess}~S|^2+\mathcal{B}_3,
\end{aligned}
\end{equation*}
which completes the proof.
\end{proof}

\subsection{The third Simons-type identity}

The following integral equality is the key to proving Theorem \ref{maintheorem} (iii).

\begin{thm}\label{thm An integral inequality}
Let $M$ be a closed minimal surface immersed in $\mathbb{S}^N(1)$ with positive Gaussian curvature. Then
\begin{equation*}
\begin{aligned}
\int_M S(3S-4)(3S-5)(5S-9)
&=\int_M\left[\frac{3}{2}(11S-21)|\nnn S|^2-\frac{5}{4}(\tri S)^2+2\mathcal{C}_2+2\mathcal{C}_3\right]\\
&=2\int_M\left[\mathcal{B}_3-\frac{1}{8}S(3S-4)(45S^2-144S+116)\right.\\
&\hspace{4.5em}\left.+\frac{1}{8}(65S-166)|\nnn S|^2-\frac{5}{8}(\tri S)^2\right],
\end{aligned}
\end{equation*}
where $\mathcal{C}_2=2\sum\left[(a_{111}^\alpha-a_{122}^\alpha)^2+(a_{211}^\alpha-a_{222}^\alpha)^2\right]$ and $\mathcal{C}_3=2\sum\left[(a_{112}^\alpha+a_{121}^\alpha)^2+(a_{212}^\alpha+a_{221}^\alpha)^2\right]$.
\end{thm}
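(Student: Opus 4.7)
The plan is to integrate the pointwise Laplacian formula for $\mathcal{B}_2$ established in Theorem \ref{thm laplacian B2} over the closed surface $M$, and then systematically reduce every resulting term to integrals involving only $S$, its derivatives, $\mathcal{C}_2$, and $\mathcal{C}_3$. The divergence term $(h_{ijkl}^\alpha \triangle h_{ijk}^\alpha)_l$ vanishes after integration, leaving the mixed integrals $\int(21S^2-64S+49)\mathcal{B}_1$, $\int(1-S/2)\mathcal{B}_2$, $\int\langle\nabla\mathcal{B}_1,\nabla S\rangle$, $\int|\mathrm{Hess}\,S|^2$, and $\int\mathcal{B}_3$ to be dealt with.

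For every occurrence of $\mathcal{B}_1$ I will use Theorem \ref{NSId}, which reads $\mathcal{B}_1=\tfrac{1}{2}\triangle S+\tfrac{1}{2}S(3S-4)$, followed by integration by parts; this sends any $\int p(S)\mathcal{B}_1$ to $\tfrac{1}{2}\int p(S)S(3S-4)-\tfrac{1}{2}\int p'(S)|\nabla S|^2$, and in particular $\int\langle\nabla\mathcal{B}_1,\nabla S\rangle$ becomes $-\tfrac{1}{2}\int(\triangle S)^2+\int(3S-2)|\nabla S|^2$. For the $\mathcal{B}_3$ integral I first use Lemma \ref{lem a lower bound to B3} to express it in terms of $p(S)\mathcal{B}_1$, $q(S)|\nabla S|^2$, and $\mathcal{C}_2+\mathcal{C}_3$, and then apply the same reduction. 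The most delicate integral is $\int(1-S/2)\mathcal{B}_2$: combining the identity \eqref{SecEq} from Remark 3.5 with Theorem \ref{NSId} yields the purely differential formula
\[
\mathcal{B}_2=\tfrac{1}{4}\triangle^2 S+\tfrac{15}{8}\triangle S^2-\tfrac{9}{2}\triangle S-\tfrac{1}{2}|\nabla S|^2+\tfrac{1}{4}S(3S-4)(9S-14),
\]
after which integration by parts (using standard identities such as $\int S^2\triangle S=-2\int S|\nabla S|^2$ and $\int\triangle^2 S \cdot f(S) = \int \triangle S\cdot\triangle f(S)$) resolves every piece. Finally, the Bochner formula on $M$ together with the Gauss equation $K=(2-S)/2$ gives $\int|\mathrm{Hess}\,S|^2=\int(\triangle S)^2-\int\tfrac{2-S}{2}|\nabla S|^2$.

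Substituting these reductions into the integrated form of Theorem \ref{thm laplacian B2} and sorting by type of term, I expect four groups: (i) polynomials in $S$, (ii) polynomials in $S$ times $|\nabla S|^2$, (iii) $(\triangle S)^2$, and (iv) $\mathcal{C}_2$ and $\mathcal{C}_3$. Group (i) should collapse to $-\tfrac{1}{2}S(3S-4)(3S-5)(5S-9)$ after expanding the four polynomial products $S(3S-4)(21S^2-64S+49)$, $(1-S/2)S(3S-4)(9S-14)$, $S(3S-4)^2(7S-12)$, $S(3S-4)(45S^2-144S+116)$ and matching $S^4,S^3,S^2,S$ coefficients. Group (ii) should combine to $\tfrac{3(11S-21)}{4}|\nabla S|^2$, group (iii) to $-\tfrac{5}{8}(\triangle S)^2$, and group (iv) contributes $\mathcal{C}_2+\mathcal{C}_3$ directly from $\int\mathcal{B}_3$. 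Multiplying the resulting equation by $2$ produces the first form of the identity in the theorem; the equivalence with the second form follows at once by applying Lemma \ref{lem a lower bound to B3} in reverse and one integration by parts.

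The main obstacle is purely bookkeeping. No conceptual new ingredient is needed beyond the results already proved in the paper, but the cancellation of degree-four polynomial terms to yield exactly $-\tfrac{1}{2}S(3S-4)(3S-5)(5S-9)$ is a delicate arithmetic coincidence that must be checked coefficient by coefficient; a single sign error anywhere in the chain of integrations by parts would destroy the identity. Verifying this polynomial cancellation, together with correctly tracking the $(\triangle S)^2$ coefficient ($-\tfrac{5}{8}$) which receives contributions from $\int(1-S/2)\mathcal{B}_2$, $\int\langle\nabla\mathcal{B}_1,\nabla S\rangle$, the explicit $\tfrac{1}{4}\int(\triangle S)^2$ term, and the Bochner reduction, is the main labor of the proof.
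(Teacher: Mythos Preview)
Your plan is correct and follows essentially the same route as the paper's own proof: integrate Theorem~\ref{thm laplacian B2}, eliminate $\mathcal{B}_1$ via \eqref{TWINST}, eliminate $\mathcal{B}_2$ via the identity you wrote (which is exactly the paper's equation~\eqref{thm5eq1}, obtained from \eqref{SecEq} and \eqref{TWINST}), eliminate $\mathcal{B}_3$ via Lemma~\ref{lem a lower bound to B3}, and handle $\int|\mathrm{Hess}\,S|^2$ with the Bochner identity (which the paper cites as Reilly's formula). The only cosmetic difference is that the paper performs most of the reductions pointwise, rewriting terms as divergences before integrating, whereas you integrate first and then apply integration by parts; the arithmetic bookkeeping is the same.
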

\begin{proof}
By $\eqref{TWINST}$ we obtain
\begin{equation}
\frac{1}{2}\tri\mathcal{B}_1=\frac{1}{4}\tri^2S+\frac{3}{4}\tri S^2-\tri S.
\end{equation}
Combining $\eqref{SecEq}$ we obtain
\begin{equation}\label{thm5eq1}
\mathcal{B}_2=\frac{1}{4}S(3S-4)(9S-14)-\frac{1}{2}|\nnn S|^2+\frac{15}{8}\tri S^2-\frac{9}{2}\tri S+\frac{1}{4}\tri^2S.
\end{equation}
First, by \eqref{TWINST} and Lemma \ref{lem a lower bound to B3}, we obtain
\begin{equation}\label{L1}
\begin{aligned}
&\hspace{1.1em}-(21S^2-64S+49)\mathcal{B}_1+\mathcal{B}_3\\
&=\frac{1}{4}(-39S^2+112S-80)\mathcal{B}_1+\frac{13}{8}(7S-8)|\nnn S|^2+\mathcal{C}_2+\mathcal{C}_3\\
&=\frac{1}{8}S(3S-4)(-39S^2+112S-80)+\frac{13}{8}(7S-8)|\nnn S|^2\\
&\hspace{1.1em}-\frac{39}{8}S^2\tri S+14S\tri S-10\tri S+\mathcal{C}_2+\mathcal{C}_3.
\end{aligned}
\end{equation}
Second, by \eqref{thm5eq1} we obtain
\begin{equation}\label{L2}
\begin{aligned}
7(1-\frac{S}{2})\mathcal{B}_2&=\frac{7}{8}S(2-S)(3S-4)(9S-14)+\frac{7}{4}(S-2)|\nnn S|^2+\frac{105}{8}\tri S^2\\
&\hspace{1.1em}-\frac{105}{16}S\tri S^2-\frac{63}{2}\tri S+\frac{63}{4}S\tri S+\frac{7}{4}\tri^2S-\frac{7}{8}S\tri^2S.
\end{aligned}
\end{equation}
Also, trivial calculations give that
\begin{equation}\label{finalpoly}
\begin{aligned}
&\hspace{1.1em}\frac{1}{8}S(3S-4)(-39S^2+112S-80)+\frac{7}{8}S(2-S)(3S-4)(9S-14)\\
&\hspace{2em}+\frac{1}{4}S(3S-4)^2(7S-12)\\
&=-\frac{1}{2}S(3S-4)(3S-5)(5S-9).
\end{aligned}
\end{equation}
Therefore, combining Theorem {\ref{thm laplacian B2}}, \eqref{L1}, \eqref{L2} and \eqref{finalpoly}, we obtain
\begin{equation}
\begin{aligned}
\frac{1}{2}\tri\mathcal{B}_2&=(h^\alpha_{ijkl}\tri h^\alpha_{ijk})_l-\frac{1}{2}S(3S-4)(3S-5)(5S-9)-\frac{1}{8}(91S-92)|\nnn S|^2\\
&\hspace{1.3em}-\frac{39}{8}S^2\tri S+\frac{119}{4}S\tri S+\frac{105}{8}\tri S^2-\frac{105}{16}S\tri S^2-\frac{83}{2}\tri S\\
&\hspace{1.3em}+\frac{7}{4}\tri^2S-\frac{7}{8}S\tri^2S-\l\nnn \mathcal{B}_1,\nnn S\r+\frac{1}{4}(\tri  S)^2-\frac{1}{2}|\text{\rm Hess}~S|^2+\mathcal{C}_2+\mathcal{C}_3.
\end{aligned}
\end{equation}
To continue, we first have
\begin{align*}
S\tri S&=\frac{1}{2}\tri S^2-|\nnn S|^2,\\
S\tri S^2&=\frac{2}{3}\tri S^3-2S|\nnn S|^2,
\end{align*}
and
\begin{equation*}
S^2\tri S=\frac{1}{3}\tri S^3-2S|\nnn S|^2,
\end{equation*}
which can be obtained by easy calculations. Next, since we have
\begin{equation*}
\text{div}(\tri S\nnn S)=(\tri S)^2+\l\nnn S,\nnn(\tri S)\r
\end{equation*}
and
\begin{equation*}
\tri(S\tri S)=(\tri S)^2+2\l\nnn S,\nnn(\tri S)\r+S\tri^2S,
\end{equation*}
we obtain
\begin{equation*}
S\tri^2S=\tri(S\tri S)-2\text{div}(\tri S\nnn S)+(\tri S)^2.
\end{equation*}
Also, since we have
\begin{equation*}
\mathcal{B}_1\tri S=\frac{1}{2}(\tri S)^2+\frac{3}{2}S^2\tri S-2S\tri S
\end{equation*}
and
\begin{equation*}
\text{div}(\mathcal{B}_1\nnn S)=\mathcal{B}_1\tri S+\l\nnn \mathcal{B}_1,\nnn S\r,
\end{equation*}
we obtain
\begin{equation*}
-\l\nnn \mathcal{B}_1,\nnn S\r=-\text{div}(\mathcal{B}_1\nnn S)+\frac{1}{2}(\tri S)^2+\frac{3}{2}S^2\tri S-2S\tri S.
\end{equation*}
Hence, we have
\begin{equation*}
\begin{aligned}
-\frac{39}{8}S^2\tri S&=-\frac{13}{8}\tri S^3+\frac{39}{4}S|\nnn S|^2,\\
\frac{119}{4}S\tri S&=\frac{119}{8}\tri S^2-\frac{119}{4}|\nnn S|^2,\\
-\frac{105}{16}S\tri S^2&=-\frac{35}{8}\tri S^3+\frac{105}{8}S|\nnn S|^2,\\
-\frac{7}{8}S\tri^2S&=-\frac{7}{8}\tri(S\tri S)+\frac{7}{4}\text{div}(\tri S\nnn S)-\frac{7}{8}(\tri S)^2,~\text{and}\\
-\l\nnn \mathcal{B}_1,\nnn S\r&=-\text{div}(\mathcal{B}_1\nnn S)+\frac{1}{2}(\tri S)^2+\frac{1}{2}\tri S^3-3S|\nnn S|^2-\tri S^2+2|\nnn S|^2.
\end{aligned}
\end{equation*}
Therefore,
\begin{equation*}
\begin{aligned}
\frac{1}{2}\tri\mathcal{B}_2&=(h^\alpha_{ijkl}\tri h^\alpha_{ijk})_l-\frac{1}{2}S(3S-4)(3S-5)(5S-9)-\frac{1}{8}(91S-92)|\nnn S|^2\\
&\hspace{1.3em}-\frac{39}{8}S^2\tri S+\frac{119}{4}S\tri S+\frac{105}{8}\tri S^2-\frac{105}{16}S\tri S^2-\frac{83}{2}\tri S\\
&\hspace{1.3em}+\frac{7}{4}\tri^2S-\frac{7}{8}S\tri^2S-\l\nnn \mathcal{B}_1,\nnn S\r+\frac{1}{4}(\tri  S)^2-\frac{1}{2}|\text{\rm Hess}~S|^2+\mathcal{C}_2+\mathcal{C}_3\\
&=(h^\alpha_{ijkl}\tri h^\alpha_{ijk})_l-\frac{1}{2}S(3S-4)(3S-5)(5S-9)-\frac{11}{2}\tri S^3+27\tri S^2-\frac{83}{2}\tri S\\
&\hspace{1.3em}+\frac{7}{4}\tri^2S-\frac{7}{8}\tri(S\tri S)+\frac{7}{4}\text{div}(\tri S\nnn S)-\text{div}(\mathcal{B}_1\nnn S)+\frac{1}{4}(34S-65)|\nnn S|^2\\
&\hspace{1.3em}-\frac{1}{8}(\tri S)^2-\frac{1}{2}|\text{\rm Hess}~S|^2+\mathcal{C}_2+\mathcal{C}_3.
\end{aligned}
\end{equation*}
Integrating on both sides, we obtain
\begin{equation}\label{beforereilley}
\begin{aligned}
&\hspace{1.3em}\int_M\frac{1}{2}S(3S-4)(3S-5)(5S-9)\\
&=\int_M[\frac{1}{4}(34S-65)|\nnn S|^2-\frac{1}{8}(\tri S)^2-\frac{1}{2}|\text{\rm Hess}~S|^2+\mathcal{C}_2+\mathcal{C}_3].
\end{aligned}
\end{equation}
By Reilly's formula~(cf. \cite{22}), we obtain
\begin{equation*}
\int_M\left[2(\tri S)^2-2|\text{\rm Hess}~S|^2+(S-2)|\nnn S|^2\right]=0.
\end{equation*}
Therefore, $\eqref{beforereilley}$ becomes
\begin{equation*}
\int_M\frac{1}{2}S(3S-4)(3S-5)(5S-9)=\int_M\left[\frac{3}{4}(11S-21)|\nnn S|^2-\frac{5}{8}(\tri S)^2+\mathcal{C}_2+\mathcal{C}_3\right].
\end{equation*}
Combining Lemma \ref{lem a lower bound to B3}, we obtain
\begin{equation*}
\begin{aligned}
\int_M S(3S-4)(3S-5)(5S-9)&=2\int_M[\mathcal{B}_3-\frac{1}{8}S(3S-4)(45S^2-144S+116)\\
&\hspace{4.5em}+\frac{1}{8}(65S-166)|\nnn S|^2-\frac{5}{8}(\tri S)^2],
\end{aligned}
\end{equation*}
which completes the proof.
\end{proof}

\subsection{Proof of Theorem \ref{maintheorem}}
First, we give an upper bound of the integral of $|\nnn S|^2$.
\begin{lem}\label{tiduguji}
Let $M$ be a closed surface minimally immersed into $\mathbb{S}^N(1)$. If $\frac{5}{3}\le S\le\frac{9}{5}$ and $S_{\min}=\inf_{p\in M}S(p)$, then
$$\int_M|\nnn S|^2\le\int_M S(3S-4)(S-S_{\min}).$$
\end{lem}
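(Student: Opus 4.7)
The plan is to test the refined Simons identity of Theorem \ref{NSId}, namely
$$\tfrac{1}{2}\tri S=\mathcal{B}_1-\tfrac{1}{2}S(3S-4),$$
against the nonnegative weight $S-S_{\min}$. This identity is available because the hypothesis $S\le 9/5<2$ forces the Gaussian curvature $K=1-S/2$ to be strictly positive on all of $M$, matching the standing assumption of Theorem \ref{NSId}.

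Multiplying both sides of the identity by $2(S-S_{\min})$ and integrating over the closed surface $M$, I apply the divergence theorem to the left-hand side. Since $\nnn(S-S_{\min})=\nnn S$, this produces
$$\int_M(S-S_{\min})\tri S=-\int_M\langle\nnn(S-S_{\min}),\nnn S\rangle=-\int_M|\nnn S|^2,$$
so that, after rearrangement,
$$\int_M|\nnn S|^2=\int_M S(3S-4)(S-S_{\min})-2\int_M(S-S_{\min})\mathcal{B}_1.$$

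Finally, both $S-S_{\min}\ge 0$ (by definition of the infimum) and $\mathcal{B}_1\ge 0$ (as a squared norm) hold pointwise, so the second integral on the right is nonnegative and can simply be dropped, yielding the stated inequality. There is essentially no obstacle here; the clean point is the choice of weight $(S-S_{\min})$, which is nonnegative and which allows the $\mathcal{B}_1$-term to be discarded by a pure sign argument rather than by any delicate estimate. As a consistency check, the hypothesis $5/3\le S\le 9/5$ makes $S(3S-4)>0$ pointwise, so the resulting upper bound is manifestly nonnegative, as it must be.
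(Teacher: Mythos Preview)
Your proof is correct and follows essentially the same route as the paper: both multiply the refined Simons identity \eqref{TWINST} by a weight, integrate by parts, and exploit $\mathcal{B}_1\ge 0$ together with $S\ge S_{\min}$. The only cosmetic difference is that the paper multiplies by $S$, bounds $-2S\mathcal{B}_1\le -2S_{\min}\mathcal{B}_1$, and then invokes $\int_M\mathcal{B}_1=\tfrac{1}{2}\int_M S(3S-4)$, whereas you multiply directly by $S-S_{\min}$ and drop the term $2\int_M(S-S_{\min})\mathcal{B}_1$ in one stroke---a slightly cleaner bookkeeping of the same idea.
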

\begin{proof}
Since $\text{div}(S\nnn S)=S\tri S+\l\nnn S,\nnn S\r$, then using \eqref{TWINST} we obtain
\begin{align*}
\int_M|\nnn S|^2&=\int_M-S\tri S\\
&=\int_M-S(2\mathcal{B}_1-S(3S-4))\\
&=\int_M(-2S\mathcal{B}_1+S^2(3S-4))\\
&\le-2S_{\min}\int_M\mathcal{B}_1+\int_M S^2(3S-4)\\
&=\int_M S(3S-4)(S-S_{\min}),
\end{align*}
which proves the lemma.
\end{proof}
\hspace{-2em}By Theorems \ref{thm gap s=1} and \ref{thm gap s=2}, Theorem \ref{maintheorem} (i)-(ii) is true. It remains to prove Theorem \ref{maintheorem} (iii). Notice that
\begin{equation*}
\begin{aligned}
\int_M(\tri S)^2&=2\int_M\mathcal{B}_1\tri S-\int_M S(3S-4)\tri S\\
&=2\int_M\mathcal{B}_1\tri S-3\int_M S^2\tri S+4\int_M S\tri S\\
&=2\int_M\mathcal{B}_1\tri S+\int_M(6S-4)|\nnn S|^2.
\end{aligned}
\end{equation*}
By the divergence theorem and (\ref{SecEq}) we obtain
\begin{equation*}
\begin{aligned}
\frac{1}{2}\int_M \mathcal{B}_1\tri S
&=\frac{1}{2}\int_M S\tri\mathcal{B}_1\\
&=\int_M\left(-\frac{7}{2}|\nnn S|^2+\frac{11}{4}S|\nnn S|^2-\frac{1}{4}S^2(3S-4)(9S-14)+S\mathcal{B}_2\right)\\
&\le\int_M\left((\frac{11}{4}S-\frac{7}{2})|\nnn S|^2-\frac{1}{4}S^2(3S-4)(9S-14)+S_{\max}(\frac{1}{4}S(3S-4)(9S-14)-\frac{1}{2}|\nnn S|^2)\right).
\end{aligned}
\end{equation*}
Then
\begin{equation*}
2\int_M\mathcal{B}_1\tri S\le\int_M(11S-14-2S_{\max})|\nnn S|^2-\int_M S(3S-4)(9S-14)(S-S_{\max}).
\end{equation*}
Combining these calculations gives that
\begin{equation}\label{xiayihang}
\int_M(\tri S)^2\le\int_M(17S-18-2S_{\max})|\nnn S|^2-\int_M S(3S-4)(9S-14)(S-S_{\max}).
\end{equation}
Inserting \eqref{xiayihang} into Theorem \ref{thm An integral inequality} and using Lemma \ref{tiduguji} give that
\begin{equation*}
\begin{aligned}
0&\ge\int_M S(3S-4)[-\frac{1}{2}(3S-5)(5S-9)+\frac{5}{8}(9S-14)(S-S_{\max})]\\
&\hspace{2.5em}+\int_M(-\frac{19}{8}S-\frac{9}{2}+\frac{5}{4}S_{\max})|\nnn S|^2\\
&\ge\int_M S(3S-4)[-\frac{1}{2}(3S-5)(5S-9)+\frac{5}{8}(9S-14)(S-S_{\max})]\\
&\hspace{2.5em}+\int_M(-\frac{19}{8}S_{}-\frac{9}{2}+\frac{5}{4}S_{\max})S(3S-4)(S-S_{\min})\\
&\ge\int_M S(3S-4)[-\frac{1}{2}(3S-5)(5S-9)+\frac{5}{8}(9S-14)(S-S_{\max})]\\
&\hspace{2.5em}+\int_M(-\frac{19}{8}S_{\max}-\frac{9}{2}+\frac{5}{4}S_{\max})S(3S-4)(S-S_{\min})\\
&=\int_M S(3S-4)[\frac{1}{2}(3S-5)(9-5S)+\frac{5}{8}(9S-14)(S-S_{\max})+(S-S_{\min})(-\frac{9}{8}S_{\max}-\frac{9}{2})]\\
&\eqqcolon\int_M S(3S-4)\frac{\mathcal{A}}{8},
\end{aligned}
\end{equation*}
where $\mathcal{A}=4(3S-5)(9-5S)+5(9S-14)(S-S_{\max})+(S-S_{\min})(-9S_{\max}-36)$. Here, with the condition $\frac{5}{3}\leq S\leq \frac{9}{5}$, we used the relation
\begin{equation*}
-\frac{19}{8}S_{}-\frac{9}{2}+\frac{5}{4}S_{\max}<0.
\end{equation*}
Assume that $\mathcal{A}>0$. Then, with $S>\frac{4}{3}$, we have
\begin{equation*}
0\ge\int_M S(3S-4)\mathcal{A}>0,
\end{equation*}
which creates a contradiction. Hence, $\mathcal{A}$ cannot be identically positive. To estimate $\mathcal{A}$, we have
\begin{equation*}
\begin{aligned}
\mathcal{A}&=4(3S-5)(9-5S)+5(S-S_{\max})(9S-14)-(S-S_{\min})(9S_{\max}+36)\\
&\ge4(3S-5)(9-5S)+5(S_{\min}-S_{\max})(9S_{\max}-14)-(S_{\max}-S_{\min})(9S_{\max}+36)\\
&=4(3S-5)(9-5S)-(S_{\max}-S_{\min})(54S_{\max}-34).
\end{aligned}
\end{equation*}
Suppose that $S_{\max}-S_{\min}<\varepsilon$, where $\frac{5}{3}\le S\le\frac{9}{5}$. Then
\begin{equation*}
\begin{aligned}
\mathcal{A}&>4(3S-5)(9-5S)-\varepsilon(54S_{\max}-34)\\
&>4(3S-5)(9-5S)-\varepsilon(54(S_{\min}+\varepsilon)-34)\\
&>4(3S_{\min}-5)(9-5S_{\max})-\varepsilon(54S_{\min}-34)-54\varepsilon^2\\
&>4(3S_{\min}-5)(9-5(S_{\min}+\varepsilon))-\varepsilon(54S_{\min}-34)-54\varepsilon^2\\
&=4(3S_{\min}-5)(9-5S_{\min})+\varepsilon(134-114S_{\min})-54\varepsilon^2.
\end{aligned}
\end{equation*}
Then we obtain that $$\mathcal{A}_1\coloneqq4(3S_{\min}-5)(9-5S_{\min})+\varepsilon(134-114S_{\min})-54\varepsilon^2$$ cannot be identically nonnegative. Let $$\mathcal{F}=(134-114S_{\min})^2+864(3S_{\min}-5)(9-5S_{\min}).$$ If
\begin{equation*}
\frac{(134-114S_{\min})-\sqrt{\mathcal{F}}}{108}\leq\varepsilon\leq\frac{(134-114S_{\min})+\sqrt{\mathcal{F}}}{108}\eqqcolon\varepsilon_0,
\end{equation*}
then $\mathcal{A}>\mathcal{A}_1\geq0$, which creates a contradiction. Choosing $\varepsilon=\varepsilon_0$, one has $$S_{\max}-S_{\min}\geq\varepsilon_0.$$ Therefore, we proved Theorem \ref{maintheorem}.\qed

\begin{acknow}
The authors are grateful to the reviewers for their insightful comments that significantly enhanced this work. The authors would like to thank Professor Zizhou Tang (Nankai University) for his encouragements and support. Finally, the authors want to thank Haiyang Wang for his discussions when he was a graduate student in Beijing Normal University.
\end{acknow}


\end{document}